\title{The existence of minimizers for an isoperimetric problem with Wasserstein penalty term in unbounded domains}
\author{Qinglan Xia and Bohan Zhou}
\newcommand{\Addresses}{{
  \bigskip
  \footnotesize

  QINGLAN XIA, \textsc{Department of Mathematics, University of California at Davis}\par\nopagebreak
  \textit{E-mail address}: \texttt{qlxia@math.ucdavis.edu}

  \medskip

  BOHAN ZHOU, \textsc{Department of Mathematics, University of California at Davis}\par\nopagebreak
  \textit{E-mail address}: \texttt{bhzhouzhou@math.ucdavis.edu}

}}
\date{\today}
\begin{document}

\maketitle
\begin{abstract}
In this article, we consider the (double) minimization problem
\[\min\left\{P(E;\Omega)+\lambda W_p(\Leb^d\measurerestr E,\Leb^d\measurerestr F):~E\subseteq\Omega,~F\subseteq \mathbb{R}^d,~\abs{E\cap F}=0,~ \abs{E}=\abs{F}=1\right\},\]
where $p\geqslant 1$, $\Omega$ is a (possibly unbounded) domain in $\R^d$, $P(E;\Omega)$ denotes the relative perimeter of $E$ in $\Omega$ and $W_p$ denotes the $p$-Wasserstein distance. When $\Omega$ is unbounded and $d\geqslant 3$, it is an open problem proposed by Buttazzo, Carlier and Laborde in the paper \textit{On the Wasserstein distance between mutually singular measures}. We prove the existence of minimizers to this problem when $\frac{1}{p}+\frac{2}{d}>1$, $\Omega=\R^d$ and $\lambda$ is sufficiently small. 
\end{abstract}
\noindent{\textbf{~~Keywords}:\/} isoperimetric problem, Wasserstein distance, quasi-perimeter, unbounded domains, volume constraints.

\noindent{\textbf{~~2010 Mathematics Subject Classifications}:\/} 49J45, 49Q20, 49Q05, 49J20.

\section{Introduction}

In this paper, we consider an open question left by Buttazzo, Carlier and Laborde in \cite{buttazzo2017wasserstein}. Let $\Omega$ denote a (possibly unbounded) open domain in $\R^d$ with volume $\abs{\Omega}>1$. For $\lambda\geqslant 0$ and $p\geqslant 1$, authors of \cite{buttazzo2017wasserstein} consider the following (double) minimization problem:
\begin{align}
\label{eq: Buttazzo}
    \min\left\{P(E; \Omega)+\lambda W_p(\Leb^d\measurerestr E,\Leb^d\measurerestr F):~E\subseteq\Omega,~F\subseteq \mathbb{R}^d,~\abs{E\cap F}=0,~ \abs{E}=\abs{F}=1\right\}
\end{align}
where $P(E;\Omega)$ denotes the relative perimeter of $E$ in $\Omega$ (\cite{maggi2012sets}) and $W_p$ denotes the $p$-Wasserstein distance (\cite{Villani2003Topics}) between probability measures.

As studied in \cite{Peletier2009Partial,Lussardi2014Variational}, this type of problem arises from some biological models of bi-layer membranes. To study such an isoperimetric problem with Wasserstein penalty term, to our best knowledge, most literature assume that $\Omega$ is bounded and $F$ is given. For instance, to model materials cracking problem, the first author in \cite{xia2005regularity} studies the existence and regularity when the second term is replaced by $\lambda W_p^p(\Leb^d\measurerestr E,\sigma \Leb^d\measurerestr \Omega)$, where $\Omega$ is bounded and $\abs{E}=\sigma\abs{\Omega}$. Milakis in \cite{Milakis2006Regularity} studies an analogous problem for $W_2^2(\Leb^d\measurerestr E, \Leb^d\measurerestr F)$ as the second term when $\Omega$ is a bounded smooth domain and $F$ is given. In other scenarios, for fixed $F$, if one replaces the perimeter term by some functional on $E$ and adopts $W_2^2(\Leb^d\measurerestr E, \Leb^d\measurerestr F)$, such a variational problem corresponds to the \textit{Jordan-Kinderlehrer-Otto} (JKO) scheme (\cite{JKO1998Fokker}), which can be regarded as a gradient flow under Wasserstein metric (see the review paper \cite{Santambrogio2017GF}). This leads to many interesting problems and applications (see \cite{DePhilippis2016BV,Santambrogio2018crowd,di2019jko}). When $\Omega$ is unbounded, besides the classical Euclidean isoperimetric problem (see \cite{morgan1996surface}) and the founding work by Almgren in \cite{Almgren1976Existence} on minimizing clusters problem, Kn\"{u}pfer and Muratov in \cite{Knupfer2013isoperimetric,Knupfer2014Isoperimetric} study an isopermetric problem with a non-Wasserstein term. The penalty term there are generated by a kernel given by an inverse power of the distance. Other related work might be found in \cite{Figalli2015Isoperimetry}.

In \cite{buttazzo2017wasserstein} Buttazzo et al. prove the existence of minimizers to \eqref{eq: Buttazzo} for the following cases when $\lambda>0$:
\begin{itemize}
    \item For any $d$, when $\Omega$ is bounded, the minimization problem \eqref{eq: Buttazzo} admits a solution.
    \item For $d=2$ and $\Omega=\R^2$, the minimization problem \eqref{eq: Buttazzo} admits a solution.
    \item For $d=1$, a solution can be constructed by disjoint equal sub-intervals, whose number depends on $\lambda$.
\end{itemize}

Their proof for the case $d=2$ and $\Omega=\R^2$ relies on the fact that for a connected set, its diameter is bounded by its perimeter, which only holds for dimension two. Therefore the existence to such a minimization problem is still open for a unbounded domain $\Omega$ of dimension more than two.

In this article, we adopt a new approach that is valid for every dimension $d$. It provides the existence result in every dimension for small $\lambda$.

\begin{theorem}\label{thm: minEset}
Suppose $p\geqslant 1$, $d\geqslant 1$ with $\frac{1}{p}+\frac{2}{d}>1$ and $\Omega=\R^d$, there exists $\lambda_0=\lambda_0(d,p)>0$, such that for any $0<\lambda\leqslant \lambda_0$, the minimization problem
\begin{equation}
    \label{eq: minEset}
    \min\left\{P(E)+\lambda W_p(\Leb^d\measurerestr E, \Leb^d\measurerestr F):~E,F\subseteq \mathbb{R}^d,~\abs{E\cap F}=0,~ \abs{E}=\abs{F}=1\right\}
\end{equation}
admits a solution.
\end{theorem}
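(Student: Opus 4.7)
The strategy is the direct method: produce a minimizing sequence $(E_n, F_n)$ for the energy $P(E) + \lambda W_p(\Leb^d\measurerestr E, \Leb^d\measurerestr F)$ and extract a convergent subsequence; call the infimum $m_\lambda$. The central difficulty is loss of compactness on the unbounded ambient space, which I plan to rule out by concentration-compactness together with a scaling identity. Two facts drive the argument. First, testing two disjoint unit balls as a competitor yields $m_\lambda \leq C_d + C\lambda$, where $C_d = P(B)$ is the perimeter of a unit-volume ball. Second, letting $m_\lambda^v$ denote the analogous infimum with the volume constraint $|E|=|F|=v$, the scaling rules $P(cA)=c^{d-1}P(A)$ and $W_p(\Leb^d\measurerestr cA,\Leb^d\measurerestr cB)=c^{1+d/p}W_p(\Leb^d\measurerestr A,\Leb^d\measurerestr B)$ give
\[
m_\lambda^v \;=\; v^{(d-1)/d}\, m^1_{\lambda v^{\beta}}, \qquad \beta := \tfrac{1}{p}+\tfrac{2}{d}-1.
\]
The hypothesis $\beta > 0$ enters only here: it ensures that the effective coupling $\lambda v^\beta$ shrinks with $v$, so on small scales the perimeter dominates and $m_\lambda^v \geq C_d v^{(d-1)/d}$ (using the bound $m^1_\mu \geq C_d$ from the classical isoperimetric inequality).

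Now apply Lions' concentration-compactness to $\chi_{E_n}$. Vanishing is ruled out because it would force the perimeter to blow up. In the dichotomy case, $E_n$ separates into two clusters of masses $\alpha\in(0,1)$ and $1-\alpha$ at diverging distance; the bounded Wasserstein distance forces $F_n$ to split in matching proportions (otherwise mass must be transported across an unbounded gap, contradicting the bound on $W_p$). The resulting energy is bounded below by
\[
m_\lambda^\alpha + m_\lambda^{1-\alpha} \;\geq\; C_d\bigl(\alpha^{(d-1)/d}+(1-\alpha)^{(d-1)/d}\bigr) \;>\; C_d,
\]
and for $\alpha$ in any compact subinterval of $(0,1)$ this strictly exceeds $C_d + C\lambda$ once $\lambda$ is small enough, a contradiction. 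The marginal case of $\alpha$ close to $0$ or $1$ is handled by iterating the concentration-compactness to decompose $E_n$ into clusters of masses $\alpha_1,\alpha_2,\ldots$ with $\sum \alpha_i = 1$; combining the perimeter bound $\sum_i \alpha_i^{(d-1)/d}\leq 1+C\lambda/C_d$ with the elementary inequality $\sum_i \alpha_i^{(d-1)/d}\geq(\max_i\alpha_i)^{-1/d}$ (since $\alpha_i^{(d-1)/d}\geq \alpha_i\cdot(\max_j\alpha_j)^{-1/d}$ for each $i$) forces a single dominant cluster to carry a $(1-O(\lambda))$-fraction of the mass, reducing to essentially the compact case.

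Once compactness of $E_n$ modulo translations is secured, translate so that the dominant cluster sits near the origin. The triangle inequality
\[
W_p(\Leb^d\measurerestr F_n,\delta_0)\;\leq\; W_p(\Leb^d\measurerestr F_n,\Leb^d\measurerestr E_n)+W_p(\Leb^d\measurerestr E_n,\delta_0)
\]
yields a uniform $p$-th moment bound on $F_n$, hence tightness. Extract subsequences with $E_n\to E$ in $L^1$ and $\Leb^d\measurerestr F_n \rightharpoonup \Leb^d\measurerestr F$ weakly, and conclude by lower semicontinuity of the perimeter and the $p$-Wasserstein distance. The two main obstacles I expect are (i) the marginal dichotomy analysis sketched above, for which the scaling hypothesis $\beta>0$ is essential to suppress small escaping pieces; and (ii) verifying that the disjointness constraint $|E\cap F|=0$ passes to the limit under only weak convergence of $F_n$. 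The latter may be circumvented by first minimizing in $F$ for fixed $E$, producing a quasi-perimeter functional of $E$ alone whose lower semicontinuity can then be verified directly.
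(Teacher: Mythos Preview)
Your approach via Lions' concentration--compactness is genuinely different from the paper's. The paper never invokes concentration--compactness; instead it (i) uses Almgren's Nucleation Lemma and a covering--packing construction to replace an arbitrary minimizing sequence by one of \emph{bounded} sets, then (ii) reduces to the single-set functional $T(E)=P(E)+\mathcal{W}_p(E)$ with $\mathcal{W}_p(E)=\min_F W_p(E,F)$, and finally (iii) for small volume uses the quantitative isoperimetric inequality together with a cut-and-rescale \emph{nonoptimality criterion} and a Gronwall argument to force a uniformly bounded minimizing sequence. Your final remark about ``minimizing in $F$ first'' is exactly step (ii), so that part of your plan aligns with the paper.

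The genuine gap in your plan is the marginal dichotomy case, and your iteration sketch does not close it. Your inequality $\sum_i \alpha_i^{(d-1)/d}\ge(\max_i\alpha_i)^{-1/d}$ only yields $\max_i\alpha_i\ge 1-O(\lambda)$; it does \emph{not} rule out a splitting into one piece of mass $1-\alpha$ and an escaping piece of mass $\alpha$ with $\alpha$ of order $\lambda^{d/(d-1)}$ or smaller. For such $\alpha$ your lower bound $C_d(\alpha^{(d-1)/d}+(1-\alpha)^{(d-1)/d})$ exceeds $C_d$ by only $O(\alpha^{(d-1)/d})=O(\lambda)$, which is not strictly larger than your upper bound $C_d+C\lambda$. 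So strict subadditivity fails in this regime, and ``reducing to essentially the compact case'' is unjustified: a vanishingly small piece could still escape along the sequence without violating any of your inequalities. This is precisely the obstacle the paper's machinery is designed to overcome. Its Proposition~6.1 shows that if a competitor $G$ splits as $G_1\cup G_2$ with $|G_2|\le\varepsilon|G_1|$ and the slicing surface satisfies $P(G_1)+P(G_2)-P(G)\le\tfrac12 T(G_2)$, then the rescaled set $\ell G_1$ (with $\ell^d=|G|/|G_1|$) strictly beats $G$; and if instead the slicing surface is large for every radius, a coarea/Gronwall estimate forces $|G\setminus B_1|=0$. Either mechanism would let you absorb or eliminate the escaping $O(\lambda)$ mass, but neither is present in your plan. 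A second, smaller point: in the dichotomy step you also need to control the perimeter created along the cut separating the two clusters (the standard issue when applying concentration--compactness to perimeter functionals), which again requires a coarea-type slicing argument you have not supplied.
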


For $d\geqslant 3$ and $\Omega=\R^d$, the main difficulty is that we only have compactness of sets of locally finite perimeter. As a consequence, the limit set of any minimizing sequence with respect to convergence in measure may not satisfy the volume constraint. To overcome this obstacle, we adopt the following strategy:

\begin{itemize}
    \item \textbf{Equivalent formulation in a volume parameter $m$.} To normalize the parameter $\lambda$ in the problem \eqref{eq: minEset}, we apply scaling arguments and obtain an equivalent formulation in the problem \eqref{problem: m} with a volume parameter $\abs{E}=m$.
    
    \item \textbf{Existence of a minimizing sequence of bounded sets.} We prove in \prettyref{thm: alter_bound_seq} that there exists a minimizing sequence of \textit{bounded} sets to the problem \eqref{problem: m}. In our proof, we use a ``covering-packing'' technique: We first cover the majority of the set $E$ by a prescribed number of balls with same radius in Proposition \prettyref{prop: perimeter_inc}. Here we use the so-called \textit{Nucleation Lemma} in \cite{maggi2012sets}, which is a tool from Almgren's seminal paper \cite{Almgren1976Existence} for minimizing clusters problem. Then we pack all balls into a ball of prescribed radius in \prettyref{thm: rearrange}. Applying this ``covering-packing" technique to any given minimizing sequence, we obtain an alternative minimizing sequence of bounded sets as desired. Now, by using the known result for $\W_p(E)\defeq\min_F W_p(E,F)$ on any bounded set $E$, we express the double minimizing problem \eqref{problem: m} into an equivalent single minimizing problem \eqref{problem: T}: $\textrm{Minimize~}P(E)+\W_p(E)$ among all bounded sets $E$ with $\abs{E}=m$.
    
    \item \textbf{Existence of a minimizing sequence of uniformly bounded sets for small volume.} To apply the direct method of calculus of variations, we further require uniform boundedness. When the volume is small, in \prettyref{thm: comp_bdset} we are able to find a minimizing sequence of uniformly bounded sets to the problem \eqref{problem: T}, through a non-optimality criterion in Proposition \prettyref{prop: nonoptimality1}. Our work is inspired by the seminal work of Kn\"{u}pfer and Muratov in \cite{Knupfer2014Isoperimetric} for an isoperimetric problem with a competing non-local term in unbounded domains.
\end{itemize}
\begin{remark}
It is interesting to compare the non-local functional $V(E)=\int_E\int_E \frac{1}{\abs{x-y}^{\alpha}}\diff{x}\diff{y}$ for $\alpha\in (0,d)$ in \cite{Knupfer2014Isoperimetric} with the non-local Wasserstein term $\W_p(E)=\min_{F}W_p(E,F)$. Both non-local terms behave like repulsive effects with respect to the set itself. The non-local term in \cite{Knupfer2014Isoperimetric}, among which the \textit{Coulombic repulsion} is a special case, is in an exact integral form. Thus it has a natural advantage to compare the functional between different sets. In opposite, the Wasserstein term consists of a minimizing process. It requires to minimize among all disjoint sets of equal volume, and to minimize among all admissible transport plans, which bring novel obstacles.
\end{remark}


The remaining of the paper is organized as follows: in \prettyref{sec: not} we introduce the notations throughout the paper. In \prettyref{sec: pre} we recall some basic definitions in geometric measure theory, with an emphasis on the theory about sets of finite perimeter and optimal transport theory. In \prettyref{sec: pre_re} we reformulate the problem \eqref{eq: minEset} into the problem \eqref{problem: m}. In \prettyref{sec: Was_functional}, we introduce the Wasserstein functional $\W_p(E)$ on any bounded Lebesgue measurable set $E$ and study its properties. In \prettyref{sec: bnd_seq}, we prove the existence of a minimizing sequence of bounded sets to the problem \eqref{problem: m}, by which we reformulate again the problem \eqref{problem: m} into the problem \eqref{problem: T}. In \prettyref{sec: small_vol}, for small volume sets, we prove the existence of a minimizing sequence of uniformly bounded sets, and use it to prove the existence of minimizers for the problem \eqref{problem: T}.

\section{Notations}\label{sec: not}
We use the following notations below throughout the paper.

\begin{longtable}{l|l} 

$B(x,r)$ or $B_r(x)$ & Open $d$-ball centered at $x$ of radius $r$ in $\R^d$.\\
$\omega_d$ & the volume of unit $d-$ball.\\
$\ell_d=(\omega_d)^{-1/d}$ & The radius of $d-$ball of volume 1.\\
$C_1\sqcup C_2$ & Disjoint union of sets $C_1$ and $C_2$.\\
$C_1\Delta C_2=(C_1\setminus C_2)\cup (C_2\setminus C_1)$ & Symmetric difference of sets $C_1$ and $C_2$.\\
$rE=\left\{rx: x\in E\right\}$ & Re-scaling of a set $E$.\\
$E+a=\left\{x+b: x\in E\right\}$ & Translation of a set $E$.\\
$\Id_E$ & The characteristic function of set $E$.\\
$\Leb^d$ or $\diff{x}$ & Lebesgue measure.\\
$\Leb^d\measurerestr E$ & $d-$Lebesgue measure restricted on a set $E$.\\
$\abs{E}=\Leb^d(E)$ & Volume ($d-$Lebesgue measure) of a set $E$.\\
$\Phi_{\#}\mu$ & Push-forward of measure $\mu$ by the mapping $\Phi$. \\
$\displaystyle\textrm{dist}(x,E)=\inf_{y\in E} \textrm{dist}(x,y)$ & Distance between a point $x$ and a set $E$ in $\R^d$.\\ 
$\mathcal{P}_c(\R^d)$ & the class of all probability measures on $\R^d$ with compact support.\\
$\mu\perp\nu$ & measures $\mu$ and $\nu$ are mutually singular.\\

\end{longtable}

\section{Preliminaries}\label{sec: pre}
In this section, we first recall related concepts in geometric measure theory with an emphasis on sets of finite perimeter \cite{maggi2012sets} and optimal transport theory \cite{Villani2003Topics, Villani2009Old}. 

\subsection{Sets of finite perimeter}\label{subsec: set_fp}
In this subsection, we closely follow Maggi's book \cite{maggi2012sets}.
\begin{definition}[Set of finite perimeter]
We say that a Lebesgue measurable set $E\subseteq \R^d$ is a set of locally finite perimeter if for every compact set $K\subseteq \R^d$ we have
$$\sup\left\{\int_E \textrm{div~}\phi(x)\diff{x}: \phi\in C_c^1(\R^d;\R^d), \textrm{spt~}\phi \subseteq K, \sup_{\R^d}\abs{\phi}\leqslant 1\right\}<\infty.$$
If the above quantity is bounded independently of $K$, then we say $E$ is a set of finite perimeter.

If $E$ is a set of locally finite perimeter, then there exists a $\R^d-$valued Radon measure $\mu_E$, called the distributional derivative of set $E$, such that
$$\int_{E}\textrm{div~}\phi(x)\diff{x}=\int_{\R^d}\phi\cdot\diff{\mu_E},\qquad\forall \phi\in C_c^1(\R^d;\R^d).$$

The perimeter of $E$ in $\Omega$, denoted by $P(E;\Omega)$, is the variation of $\mu_E$ in $\Omega$, i.e.,
$$P(E;\Omega)\defeq \abs{\mu_E}(\Omega).$$

When $\Omega=\R^d$, we adopt $P(E)$ for simplicity.
\end{definition}

\begin{definition}[Convergence in measure]
Given a sequence $\{E_n\}$ of Lebesgue measurable sets and $E$ in $\R^d$, we say that $E_n$ locally converges to $E$, denoted by $E_n \xrightarrow[]{\textrm{loc}} E$, if 
$$\lim_{n\to\infty} \abs{K\cap (E\Delta E_n)}=0,\qquad\qquad\forall K\subseteq\R^d\textrm{~compact}.$$

We say $E_n$ converges to $E$, denoted by $E_n\to E$, if
$$\lim_{n\to\infty}\abs{E\Delta E_n}=0.$$
\end{definition}

\begin{proposition}[Lower semi-continuity of perimeter]\label{prop: lsc of perimeter}
If $\{E_n\}$ is a sequence of sets of locally finite perimeter in $\R^d$ with
$$E_n \xrightarrow[]{\textrm{loc}} E,\qquad \limsup_{n\to\infty} P(E_n;K)<\infty,$$
for every compact set $K$ in $\R^d$, then $E$ is of locally finite perimeter in $\R^d$, $\mu_{E_n}\xrightharpoonup{*} \mu_E$, and for every open set $\Omega\subseteq \R^d$, we have
$$P(E;\Omega)\leqslant \liminf_{n\to\infty} P(E_n;\Omega).$$
\end{proposition}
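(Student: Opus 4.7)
The plan is to read $P(\cdot;\Omega)$ as a supremum of divergence integrals and pass the supremum through the limit. The core analytic input is: for any fixed $\phi \in C_c^1(\mathbb{R}^d;\mathbb{R}^d)$, since $\textrm{spt~}\phi$ is compact, the local convergence $E_n\xrightarrow[]{\textrm{loc}} E$ gives $\Id_{E_n}\to\Id_E$ in $L^1$ on a neighborhood of $\textrm{spt~}\phi$, hence
\[
\int_{E_n}\textrm{div~}\phi\,\diff{x}\;\longrightarrow\;\int_E\textrm{div~}\phi\,\diff{x}.
\]
Everything else is rearrangement of definitions.

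For the lower semi-continuity statement, I would fix an open $\Omega\subseteq\mathbb{R}^d$ and any admissible test field $\phi\in C_c^1(\Omega;\mathbb{R}^d)$ with $\sup|\phi|\leq 1$. Using $\int_{E_n}\textrm{div~}\phi\,\diff{x}=\int\phi\cdot\diff{\mu_{E_n}}\leq|\mu_{E_n}|(\Omega)=P(E_n;\Omega)$ together with the displayed limit,
\[
\int_E\textrm{div~}\phi\,\diff{x}\;=\;\lim_n\int_{E_n}\textrm{div~}\phi\,\diff{x}\;\leq\;\liminf_n P(E_n;\Omega).
\]
The hypothesis $\limsup_n P(E_n;K)<\infty$ on every compact $K$ keeps the right side finite whenever $\Omega$ is restricted to a bounded neighborhood of $K$, so the sup over $\phi$ supported in that neighborhood is finite. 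This simultaneously shows that $E$ has locally finite perimeter and produces its distributional derivative $\mu_E$. Taking the supremum of the left side over all admissible $\phi$ recovers $P(E;\Omega)$, giving the lsc inequality.

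For the weak-$*$ convergence $\mu_{E_n}\xrightharpoonup{*}\mu_E$, the local uniform mass bound supplies weak-$*$ sequential pre-compactness (Banach--Alaoglu on the dual of $C_0(K;\mathbb{R}^d)$ for each compact $K$, combined with a diagonal argument across an exhaustion of $\mathbb{R}^d$). Any subsequential limit $\nu$ satisfies $\int\phi\cdot\diff{\nu}=\int_E\textrm{div~}\phi\,\diff{x}=\int\phi\cdot\diff{\mu_E}$ for every $\phi\in C_c^1$ by the first display, and density of $C_c^1$ in $C_c^0$ together with the mass bound upgrades this identification against every compactly supported continuous test field. Hence $\nu=\mu_E$, and since every subsequence has a further subsequence converging to the same $\mu_E$, the full sequence converges weak-$*$.

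No step is delicate in itself; the one thing to avoid is a circular appeal in which one invokes $\mu_E$ before proving it exists. Either order (deduce existence from the lsc estimate, as above, or first extract $\mu_E$ as a weak-$*$ subsequential limit and verify it represents the distributional derivative of $\Id_E$) resolves this cleanly. The hypothesis that the $\limsup$ is finite on every compact set rather than globally is precisely what allows the argument to run on unbounded $\Omega$, which is the setting relevant to the rest of the paper.
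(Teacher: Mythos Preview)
The paper does not give its own proof of this proposition: it is stated in the preliminaries section as a standard fact from the theory of sets of finite perimeter, with the reference being Maggi's book \cite{maggi2012sets}. Your argument is correct and is exactly the textbook proof one finds there---pass to the limit in the divergence pairing against a fixed $C_c^1$ test field, take the supremum to obtain lower semi-continuity and local finiteness, and identify $\mu_E$ as the unique weak-$*$ limit via compactness and a subsequence argument. There is nothing to compare.
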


\begin{proposition}[Compactness of uniformly bounded sets of finite perimeter]
\label{prop: compact_bdset}
If $r>0$ and $\{E_n\}$ are sets of finite perimeter in $\R^d$, with
\[
\sup_{n} P(E_n)<\infty,\qquad\mathrm{and}\qquad E_n\subseteq B_r,\quad\forall n.
\]
Then there exists a set $E$ of finite perimeter in $\R^d$, such that up to extracting a subsequence (still denoted by $E_n$):
$$E_{n}\to E,\qquad \mu_{E_{n}}\xrightharpoonup{*}\mu_E,\qquad E\subseteq B_r.$$
\end{proposition}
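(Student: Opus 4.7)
The plan is to reduce the statement to the standard $BV$ compactness theorem applied to the characteristic functions $u_n \defeq \Id_{E_n}$. First I would fix any radius $R>r$ and regard each $u_n$ as an element of $BV(B_R)$. Since $\norm{u_n}_{L^1(B_R)} = \abs{E_n} \leqslant \abs{B_r}$ and the total variation of $u_n$ over $B_R$ equals $P(E_n;B_R) \leqslant P(E_n)$, the hypothesis $\sup_n P(E_n)<\infty$ yields a uniform $BV(B_R)$ bound. The Rellich-type embedding $BV(B_R)\hookrightarrow L^1(B_R)$ then provides a subsequence (not relabeled) and a function $u\in BV(B_R)$ with $u_n\to u$ in $L^1(B_R)$.

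Next I would extract a further subsequence so that $u_n\to u$ pointwise a.e. Because each $u_n$ takes values in $\{0,1\}$, so does $u$ a.e., and therefore $u=\Id_E$ for some Lebesgue measurable set $E$. Since every $u_n$ vanishes outside $B_r$, so does $u$ (up to a null set), and by choosing a representative we may assume $E\subseteq B_r$. The $L^1$ convergence translates exactly to the convergence in measure used in the paper:
\[
\abs{E\Delta E_n} \;=\; \int_{\R^d}\abs{\Id_E-\Id_{E_n}}\,\diff{x} \;=\; \norm{u-u_n}_{L^1(B_R)} \;\longrightarrow\; 0,
\]
so $E_n\to E$ in the sense of the paper, and a fortiori $E_n\xrightarrow{\mathrm{loc}} E$.

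Finally, since $\sup_n P(E_n;K)\leqslant \sup_n P(E_n)<\infty$ for every compact $K\subseteq\R^d$, \prettyref{prop: lsc of perimeter} applies: $E$ is of locally finite perimeter, $\mu_{E_n}\xrightharpoonup{*}\mu_E$, and $P(E)\leqslant \liminf_{n\to\infty} P(E_n)<\infty$, so $E$ is in fact of finite perimeter. This yields the three conclusions simultaneously.

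There is no serious obstacle: the entire argument is a packaging of the $BV$ compactness theorem on a bounded container together with \prettyref{prop: lsc of perimeter}. The only point that requires a moment of care is upgrading $L^1$ convergence to the pointwise statement that the limit is again a characteristic function, which is handled by extracting a subsequence converging a.e.
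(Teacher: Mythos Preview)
Your argument is correct and is essentially the standard proof: apply $BV$ compactness on a bounded domain to the characteristic functions, identify the $L^1$ limit as a characteristic function via a.e.\ convergence, and then invoke \prettyref{prop: lsc of perimeter} to get the weak-$*$ convergence of the Gauss--Green measures and the finiteness of the perimeter. Note, however, that the paper does not supply its own proof of this proposition; it is listed among the preliminaries taken from Maggi's book \cite{maggi2012sets}, so there is nothing to compare against beyond observing that your write-up matches the textbook argument.
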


\begin{corollary}[Local compactness of sets of locally finite perimeter]
\label{cor: local_comp}
If $\{E_n\}$ are sets of locally finite perimeter in $\R^d$ with
$$\sup_{h} P(E_h;B_r)<\infty,\qquad \forall r>0.$$
Then there exists a set $E$ of locally finite perimeter, such that up to extracting a subsequence (still denoted by $B_n$):
$$E_{n}\xrightarrow[]{\textrm{loc}}E,\qquad \mu_{E_{n}}\xrightharpoonup{*}\mu_E.$$
\end{corollary}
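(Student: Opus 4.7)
The plan is to reduce local compactness to the bounded case (Proposition \ref{prop: compact_bdset}) by a truncation and a diagonal extraction. The only subtlety is that the sets $E_n$ are not a priori contained in any ball, and merely intersecting them with $B_R$ can add a large boundary contribution along $\partial B_R$, so the truncated sets might fail to have uniformly bounded perimeter.

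First I would handle that subtlety by an averaging argument. Fix a positive integer $k$. For each $n$, by a Fubini/coarea computation, one has
\[
\int_k^{k+1} \mathcal{H}^{d-1}(\partial B_r \cap E_n)\,\diff{r} = \abs{E_n \cap (B_{k+1}\setminus B_k)} \leq \abs{B_{k+1}},
\]
so I can pick $r_n^{(k)} \in (k,k+1)$ with $\mathcal{H}^{d-1}(\partial B_{r_n^{(k)}} \cap E_n) \leq \abs{B_{k+1}}$. Since $E_n$ is of locally finite perimeter, $E_n \cap B_{r_n^{(k)}}$ is of finite perimeter with
\[
P\bigl(E_n \cap B_{r_n^{(k)}}\bigr) \leq P(E_n; B_{k+1}) + \mathcal{H}^{d-1}\bigl(\partial B_{r_n^{(k)}} \cap E_n\bigr) \leq \sup_n P(E_n; B_{k+1}) + \abs{B_{k+1}},
\]
which is finite by hypothesis. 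Moreover, all these sets lie in $B_{k+1}$.

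Next I would apply Proposition \ref{prop: compact_bdset} to $\{E_n \cap B_{r_n^{(k)}}\}_n$ for each fixed $k$ to extract a subsequence converging in measure to some limit set $F_k \subseteq B_{k+1}$. A standard diagonal extraction over $k$ yields a single subsequence (still denoted $E_n$) such that for every $k$, $E_n \cap B_{r_n^{(k)}} \to F_k$ in measure. Since the truncation radii $r_n^{(k)}$ lie in $(k,k+1)$, for any compact $K\subseteq B_k$ we have $K \cap E_n = K \cap (E_n \cap B_{r_n^{(k)}})$, so the limit sets are consistent: $F_{k'} \cap B_k = F_k \cap B_k$ for $k'>k$ (up to null sets). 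Defining $E \defeq \bigcup_k (F_k \cap B_k)$ gives $E_n \xrightarrow{\loc} E$.

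Finally, applying Proposition \ref{prop: lsc of perimeter} on each open ball $B_r$ gives $P(E;B_r) \leq \liminf_{n\to\infty} P(E_n; B_r) < \infty$, so $E$ has locally finite perimeter, and the same proposition yields $\mu_{E_n} \xrightharpoonup{*} \mu_E$. The main obstacle is the truncation step — one has to avoid introducing uncontrolled perimeter along $\partial B_k$ — which is overcome by the averaging choice of $r_n^{(k)}$; the remainder is a routine diagonal argument combined with the lower semicontinuity of perimeter.
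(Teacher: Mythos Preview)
The paper does not supply its own proof of this corollary; it is recorded in the preliminaries as a standard fact from Maggi's book, so there is nothing to compare against. Your argument is correct and is essentially the standard proof: truncate, use a coarea averaging to pick radii $r_n^{(k)}\in(k,k+1)$ so that the intersections $E_n\cap B_{r_n^{(k)}}$ have uniformly bounded perimeter, apply Proposition~\ref{prop: compact_bdset}, diagonalize, and then invoke Proposition~\ref{prop: lsc of perimeter}. Incidentally, the same coarea trick you use to control $\mathcal{H}^{d-1}(E_n\cap\partial B_{r_n^{(k)}})$ is precisely what the paper later employs in Proposition~\ref{prop: perimeter_inc}, so your approach is very much in the spirit of the surrounding text.
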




As in \cite{Figalli2010Mass}, the \textit{isoperimetric deficit} of a set of finite perimeter $E\subseteq\R^d$ is defined by
\begin{align}
    D(E)\defeq\frac{P(E)-P(B_r)}{P(B_r)},
\end{align}
where $B_r$ is a $d-$ball with $\abs{B_r}=\abs{E}$.

The Fraenkel asymmetry of two measurable sets $E_1$ and $E_2$ with $\abs{E_1}=\abs{E_2}$ is defined by
\begin{align}
    \label{eq: Fraenkel}
    \Delta(E_1,E_2)\defeq\min_{x\in\R^d}\frac{\abs{E_1\Delta (E_2+x)}}{\abs{E_1}},
\end{align}
where $E_2+x=\left\{y+x:y\in E_2\right\}$.
\begin{theorem}[\cite{Figalli2010Mass}, Quantitative
isoperimetric inequality]\label{thm: figalli}
There exists a constant $C(d)$ such that for any set $F\subseteq \R^d$ of finite perimeter, we have
\begin{align}
    \label{eq: figalli_isop}
    \Delta(E,B_r)\leqslant C(d)\sqrt{D(E)},
\end{align}
where $B_r$ is a $d-$ball with $\abs{B_r}=\abs{E}$.
\end{theorem}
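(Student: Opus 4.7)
The plan is to adopt the mass-transportation approach of Figalli-Maggi-Pratelli, which upgrades Gromov's optimal-transport proof of the sharp isoperimetric inequality into a quantitative stability statement. By the scaling $E\mapsto rE$ we may reduce to the normalized case $\abs{E}=\omega_d$, so that $B_r=B_1$, $P(B_1)=d\omega_d$, and $D(E)=(P(E)-d\omega_d)/(d\omega_d)$.

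First I would invoke Brenier's theorem to produce the optimal transport map $T=\nabla\varphi$, with $\varphi$ convex, pushing $\omega_d^{-1}\Leb^d\measurerestr E$ onto $\omega_d^{-1}\Leb^d\measurerestr B_1$. Then $\abs{T}\leqslant 1$ on $E$ and the Monge-Amp\`ere equation gives $\det DT=1$ a.e., with $DT$ symmetric positive semidefinite in the appropriate distributional sense. The AM-GM inequality applied to the eigenvalues of $DT$ then yields
\[
\mathrm{div}(T)\geqslant d(\det DT)^{1/d}=d\quad\textrm{a.e. on }E.
\]
Integrating over $E$ and applying the divergence theorem for BV vector fields (justified via Caffarelli's regularity for Brenier maps together with the Alberti-Ambrosio-Cannarsa trace theory) produces the chain
\[
d\omega_d\leqslant \int_E\mathrm{div}(T)\,dx=\int_{\partial^*E}T\cdot\nu_E\,d\mathcal{H}^{d-1}\leqslant \int_{\partial^*E}\abs{T}\,d\mathcal{H}^{d-1}\leqslant P(E),
\]
which both reproves the sharp isoperimetric inequality and, crucially, exposes three non-negative slacks each bounded by $d\omega_d D(E)$: an AM-GM slack $\mathrm{div}(T)-d$ on $E$, a Cauchy-Schwarz slack $\abs{T}-T\cdot\nu_E$ on $\partial^*E$, and a boundary slack $1-\abs{T}$ on $\partial^*E$.

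The quantitative step is to convert these slacks into an $L^1$-bound on $\abs{E\Delta (B_1+x_0)}$ for some $x_0\in\R^d$. An elementary algebraic inequality promotes the AM-GM slack into a pointwise estimate $\abs{DT-I}^2\leqslant C_d(\mathrm{div}(T)-d)$, whence $\int_E\abs{DT-I}^2\,dx\leqslant C_d D(E)$. A Poincar\'e-Korn type inequality (after choosing $x_0$ as the mean of $T-\mathrm{id}$) then yields $L^2$-closeness of $T$ to the rigid motion $x\mapsto x+x_0$, and since $T$ takes values in $B_1$ while pushing $\Leb^d\measurerestr E$ to $\Leb^d\measurerestr B_1$, this forces $\abs{E\Delta(B_1+x_0)}\leqslant C_d\sqrt{D(E)}$. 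The square root is sharp and arises because $T-\mathrm{id}-x_0$ is controlled linearly in $D(E)$ in $L^2$, hence only by $\sqrt{D(E)}$ after passing to $L^1$.

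The main obstacle is the degenerate regime in which $E$ has long thin filaments or diffuse tails, where the Brenier map can behave wildly and the Poincar\'e constant on $E$ explodes. Figalli-Maggi-Pratelli handle this by a preliminary reduction to ``essentially bounded, well-concentrated'' sets: one truncates $E$ to a large ball and absorbs the discarded mass by enlarging a dominant component, paying a perimeter cost that is itself $O(D(E))$ up to constants. The transport argument is then run on the reduced set. I expect the bulk of the technical effort to lie precisely in making this truncation quantitative and in verifying the BV trace identity $\int_E\mathrm{div}(T)\,dx=\int_{\partial^*E}T\cdot\nu_E\,d\mathcal{H}^{d-1}$ for the merely-BV gradient map $T$ on a general set of finite perimeter.
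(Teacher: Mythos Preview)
Your proposal outlines the genuine Figalli--Maggi--Pratelli argument and is a reasonable sketch of how the quantitative isoperimetric inequality is actually established in \cite{Figalli2010Mass}. However, the paper does not prove this theorem at all: it is stated as a citation of an external result and used as a black box in the proof of Theorem~\ref{thm: comp_bdset}. So you are supplying substantially more than the paper does; for the purposes of this paper, a one-line reference to \cite{Figalli2010Mass} is all that is required.
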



\subsection{Optimal transport theory}

\begin{definition}[Wasserstein distance]
Let $\mathcal{P}_p(\R^d)\defeq\left\{\mu\in\mathcal{P}(\R^d): \int_{\R^d}\abs{x-x_0}^p\diff{\mu}(x)<+\infty\right\}$ for some point $x_0\in\R^d$. For $\mu,\nu\in \mathcal{P}_p(\R^d)$, the $p$-Wasserstein distance between $\mu$ and $\nu$ is given by
\begin{align}
    \label{eq: Wasserstein}
    W_p(\mu,\nu)\defeq \inf_{\gamma\in \Gamma(\mu,\nu)}\left(\int_{\R^d\times\R^d}\abs{x-y}^p\diff{\gamma}(x,y)\right)^{1/p},
\end{align}
where $\Gamma(\mu,\nu)$ is the collection of the so-called transport plans from $\mu$ to $\nu$, defined by
\begin{align}
    \label{eq: transport_plan}
    \Gamma(\mu,\nu)\defeq\left\{\gamma\in P(\R^d\times\R^d): (\pi_x)_{\#}\gamma=\mu, (\pi_y)_{\#}\gamma=\nu\right\},
\end{align}
where $\pi_x,\pi_y$ denote the projection from $\R^d\times\R^d$ onto each marginal space.

With a slight abuse of notation, given two Lebesgue measurable sets $E,F$ with $\abs{E}=\abs{F}=m$, $W_p(E,F)$ is given by
\[W_p(E,F)\defeq W_p(\Leb^d\measurerestr E,\Leb^d\measurerestr  F)=m^{\frac{1}{p}+\frac{1}{d}}W_p(\Leb^d\measurerestr (m^{-1/d}E),\Leb^d\measurerestr (m^{-1/d}F)).\]
\end{definition}

When $E,F$ are bounded sets of equal volume in $\R^d$ and $p\in [1,\infty)$, it is well-known (\cite{Brenier1991polar}, \cite{Gangbo1996Geometry}, \cite{Ambrosio2003Lecture},  and see also Chapter 1 and Chapter 3 in \cite{Santambrogio2015Optimal}) that there exists an \textit{optimal transport map} $\Phi$ that transports $E$ to $F$, in the sense that $\Phi_{\#}(\Leb^d\measurerestr E)=\Leb^d\measurerestr F$ and
\[W_p(E,F)=\left(\int_{E}\abs{x-\Phi(x)}^p\diff{x}\right)^{1/p}.\]

\subsection{Equivalent formulation of problem \texorpdfstring{$\eqref{eq: minEset}$}{}}\label{sec: pre_re}
For convenience sake, we consider an equivalent formulation of problem \eqref{eq: minEset} by using scaling arguments.

For any $m>0$, denote
\[\mathcal{F}_m:=\left\{(E,F): ~ E, F \subseteq \R^d,~\abs{E\cap F}=0,~ \abs{E}=\abs{F}=m\right\}.\]
Then the minimization problem (\ref{eq: minEset}) becomes 
\begin{equation}\label{problem: lambda}
 \text{ Minimize }\qquad   P(E)+\lambda W_p(E,F)
\qquad\text{ among all } (E,F)\in \mathcal{F}_1.
\end{equation}

Note that for any $(E, F)\in \mathcal{F}_1$ and $r>0$, by the scaling argument, it follows that
$(rE, rF)\in \mathcal{F}_{m}$ for $m=r^d$ with
\begin{align}\label{eq: scaling}
    P(rE)=r^{d-1}P(E)
\quad\textrm{and}\quad W_p(rE, rF)=r^{1+\frac{d}{p}}W_p(E,F).
\end{align}

Now, by setting $m$ to be the number such that
\[\lambda=m^{\frac{1}{p}+\frac{2}{d}-1}\quad\textrm{and~let}\quad r=m^{\frac{1}{d}},\]
we have
\begin{align*}
P(rE)+ W_p(rE,rF)=&r^{d-1}P(E)+r^{1+\frac{d}{p}}W_p(E,F)\\
=&r^{d-1}\left(P(E)+r^{d(\frac{2}{d}+\frac{1}{p}-1)}W_p(E,F)\right)\\
=&r^{d-1}\left(P(E)+\lambda W_p(E,F)\right).
\end{align*}

This gives an equivalent formulation of problem (\ref{problem: lambda}): For any $m\geqslant 0$,
\begin{equation}\label{problem: m} 
 \text{ Minimize }\qquad   P(E)+ W_p(E,F)
\qquad\textrm{among~all~} (E,F)\in \mathcal{F}_m.
\end{equation}
Any solution $(E, F) \in \mathcal{F}_m$ to problem (\ref{problem: m}) corresponds to a solution $(m^{-\frac{1}{d}}E, m^{-\frac{1}{d}}F)\in \mathcal{F}_1$ to problem (\ref{problem: lambda}) (i.e. problem (\ref{eq: minEset})) for
\begin{equation}\label{eq: lambda_to_m}
    \lambda=m^{^{\frac{1}{p}+\frac{2}{d}-1}}
\end{equation}
and
\[P(m^{-\frac{1}{d}}E)+\lambda W_p(m^{-\frac{1}{d}}E,m^{-\frac{1}{d}}F)=m^{\frac{1}{d}-1}\left(P(E)+ W_p(E,F)\right).\]

As a result, to prove \prettyref{thm: minEset}, it is equivalent to prove the following theorem:
\begin{theorem}\label{thm: minGset}
For $\frac{1}{p}+\frac{2}{d}>1$ and $\Omega=\R^d$, there exists $m_0=m_0(d,p)>0$, such that for any $0<m\leqslant m_0$, the minimization problem
\begin{equation}
    \label{eq: minGset}
    \min\left\{P(E)+ W_p(E,F): (E,F)\in \mathcal{F}_m\right\}
\end{equation}
admits a solution.
\end{theorem}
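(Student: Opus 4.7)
The plan is to execute the three-step strategy outlined in the introduction: reformulate the double minimization as a single one, exhibit a bounded minimizing sequence, then upgrade bounded to \emph{uniformly} bounded so that the direct method applies.

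First, for a bounded set $E\subseteq\R^d$ with $\abs{E}=m$, I would introduce the Wasserstein functional $\W_p(E)\defeq\inf\{W_p(E,F):F\subseteq\R^d,\,\abs{F}=m,\,\abs{E\cap F}=0\}$, which is attained on bounded $E$ by the optimal-transport results to be collected in \prettyref{sec: Was_functional}. The goal then becomes finding a bounded minimizer of $T(E)\defeq P(E)+\W_p(E)$ among sets with $\abs{E}=m$; any such minimizer $E$, paired with the corresponding optimal $F$, is a solution of \eqref{eq: minGset}.

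Second, given any minimizing sequence $\{(E_n,F_n)\}$ for \eqref{eq: minGset}, I would construct an alternative minimizing sequence of bounded sets via the covering-packing scheme. Almgren's Nucleation Lemma captures all but an $\varepsilon$-fraction of the volume of $E_n$ inside a bounded union of $N=N(\varepsilon,d)$ disjoint balls of common radius. Rigidly translating these balls into a single large ball $B_{R(\varepsilon)}$ yields a set $\widetilde{E}_n\subseteq B_{R(\varepsilon)}$ with $P(\widetilde{E}_n)\leqslant P(E_n)$ and volume close to $m$. The triangle inequality for $W_p$ controls the added transportation cost in terms of $\varepsilon$ and the bounded relocation distance, and a mild rescaling restores $\abs{\widetilde{E}_n}=m$ exactly. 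A diagonal extraction in $\varepsilon\to 0$ then produces the desired bounded minimizing sequence.

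The main obstacle is the third step: securing, for small $m$, a minimizing sequence contained in one fixed ball $B_{R_0}$ with $R_0=R_0(m)$ independent of $n$. Here I would use a non-optimality criterion in the spirit of Kn\"upfer--Muratov: if a candidate $E$ is too spread out, split off a far tail $E\cap(\R^d\setminus B_R)$ and translate it adjacent to the bulk. The perimeter is non-increasing under such a surgery, while the added Wasserstein cost is controlled by the tail volume and the relocation distance. The exponent condition $\tfrac{1}{p}+\tfrac{2}{d}>1$ enters through the scaling identity \eqref{eq: scaling}: at volume $m$ the effective Wasserstein weight is $\lambda=m^{1/p+2/d-1}\to 0$ as $m\to 0$, so for small $m$ the perimeter saving strictly dominates the Wasserstein cost of such a rearrangement. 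Turning this into a quantitative bound, with input from the classical isoperimetric inequality $P(E)\geqslant P(B_r)$ and the Fraenkel-asymmetry estimate of \prettyref{thm: figalli}, yields the uniform diameter bound $R_0(m)$. With the uniformly bounded minimizing sequence $\{E_n\}\subseteq B_{R_0}$ of finite perimeter in hand, \prettyref{prop: compact_bdset} extracts an $L^1$-limit $E\subseteq B_{R_0}$ with $\abs{E}=m$; lower semicontinuity of $P$ (\prettyref{prop: lsc of perimeter}) together with lower semicontinuity of $\W_p$ on sets contained in a fixed ball gives $T(E)\leqslant\liminf T(E_n)$, and choosing an optimal $F$ for $\W_p(E)$ produces the pair $(E,F)\in\mathcal{F}_m$ realizing the minimum.
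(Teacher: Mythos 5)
Your high-level map of the proof matches the paper: reformulate via the Wasserstein functional $\W_p$, manufacture a bounded minimizing sequence by Almgren nucleation plus rearrangement, then establish uniform boundedness for small $m$ and close by the direct method. The first two steps are essentially on target (with small inaccuracies: cutting $E_n$ along $\partial U$ \emph{adds} $2\mathcal{H}^{d-1}(E_n\cap\partial U)$ to the perimeter, so the paper gets $P(\widetilde{E}_n)\leqslant P(E_n)+2\varepsilon$ rather than $P(\widetilde{E}_n)\leqslant P(E_n)$; and the paper avoids a final rescaling by packing the residual volume $E_0^\varepsilon$ into a small ball-and-annulus pair explicitly).

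The real gap is in the third step, where your surgery is both misspecified and incompletely justified. You propose to split off the tail $G_2=G\setminus B_t$, translate it adjacent to the bulk $G_1=G\cap B_t$, and assert that ``the perimeter is non-increasing.'' That is false: after cutting, $P(G_1)+P(G_2)=P(G)+2\mathcal{H}^{d-1}(G\cap\partial B_t)\geqslant P(G)$, and translating $G_2$ next to $G_1$ generically does not cancel any of this extra boundary. Moreover, compactifying $E$ does not obviously help the Wasserstein term either, since $\W_p$ is an infimum over complementary sets $F$ and a spread-out $E$ may have more room to place $F$ cheaply. The paper uses an entirely different surgery in Proposition~\ref{prop: nonoptimality1}: \emph{discard} $G_2$ and \emph{dilate} $G_1$ by $\ell=(1+|G_2|/|G_1|)^{1/d}$ to restore volume. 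The subadditivity $\W_p(G_1)+\W_p(G_2)\leqslant\W_p(G)$ plus the cutting hypothesis $P(G_1)+P(G_2)-P(G)\leqslant\frac12 T(G_2)$ makes the energy drop by roughly $\frac12 T(G_2)-(\ell^{1+d/p}-1)T(G_1)$, and $\frac1p+\frac2d>1$ is used to show $T(G)\lesssim m^{1-1/d}$ so the rescaling penalty is dominated. Your proposal also omits the crucial dichotomy in Theorem~\ref{thm: comp_bdset}: when the cutting cost $\mathcal{H}^{d-1}(G\cap\partial B_t)$ is \emph{not} small relative to $T(G\setminus B_t)$ for any $t\in[r,1]$, the nonoptimality criterion is unavailable and one instead needs the coarea/Gronwall differential-inequality argument to force $|G\setminus B_1|=0$ directly. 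Without this second branch the uniform boundedness claim does not follow.
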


\section{The Wasserstein functional on bounded sets}\label{sec: Was_functional}
\begin{lemma}\label{lem: exist_F}
For any bounded Lebesgue measurable set $E\subseteq \R^d$, there exists a set $F$ such that $(E,F) \in \mathcal{F}_m$ with $m\defeq\abs{E}<\infty$, and 
\[W_p(E,F)=\min\left\{W_p(E,\widetilde{F}): (E,\widetilde{F}) \in \mathcal{F}_m\right\}.\]
\end{lemma}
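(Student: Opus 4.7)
The plan is to apply the direct method of the calculus of variations. Let $\{F_n\}$ be a minimizing sequence with $(E, F_n) \in \mathcal{F}_m$ and $W_p(E, F_n) \to \inf$. Fix $R$ so large that $E \subseteq B_R$. Since $\{W_p(E, F_n)\}$ is bounded and $E$ has bounded $p$-th moment, applying the elementary inequality $|y|^p \leqslant 2^{p-1}(|x|^p + |x-y|^p)$ to any transport plan $\gamma \in \Gamma(\Leb^d\measurerestr E, \Leb^d\measurerestr F_n)$ and integrating yields a uniform bound on $\int_{F_n} |y|^p \diff{y}$. By Prokhorov's theorem I can therefore extract a subsequence (not relabeled) such that $\Leb^d\measurerestr F_n \rightharpoonup \mu$ narrowly for some finite Radon measure $\mu$ on $\R^d$ with $\mu(\R^d) = m$, and the uniform $p$-moment bound upgrades the narrow convergence to convergence in $W_p$.

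Next I would establish three properties of the limit $\mu$. First, $\mu \leqslant \Leb^d$: for every open set $U$, narrow convergence gives $\mu(U) \leqslant \liminf_n \Leb^d(F_n \cap U) \leqslant \Leb^d(U)$, and outer regularity extends this to all Borel sets. Second, $\mu(E) = 0$: approximate $E$ from outside by open sets $U \supseteq E$ with $|U \setminus E|$ arbitrarily small, and use $\Leb^d(F_n \cap E) = 0$. Third, by the lower semicontinuity of $W_p$ under narrow convergence with uniformly integrable $p$-th moments, $W_p(\Leb^d\measurerestr E, \mu) \leqslant \liminf_n W_p(E, F_n) = \inf$. Writing $\mu = f\Leb^d$, we have $0 \leqslant f \leqslant 1$ a.e., $f = 0$ a.e.\ on $E$, and $\int f\diff{x} = m$; so $\mu$ is a minimizer in the relaxed class in which $F$ is replaced by an admissible density.

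The main obstacle is then to upgrade $\mu$ to an indicator measure, producing the set $F$ required by the statement. My plan is to invoke Kantorovich duality for the relaxed problem $\min\{W_p(\Leb^d\measurerestr E, \nu) : 0 \leqslant \nu \leqslant \Leb^d,\ \nu(E)=0,\ \nu(\R^d)=m\}$. At an optimal $\nu$ there is a Kantorovich potential $\psi$ on $\R^d \setminus E$ and a Lagrange multiplier $\alpha \in \R$ associated with the single scalar mass constraint; since the cost functional is linear in $\nu$ once the plan is fixed, a bang-bang argument forces any optimal density to equal $1$ on $\{\psi < \alpha\}$ and $0$ on $\{\psi > \alpha\}$. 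Choosing any measurable $A \subseteq \{\psi = \alpha\}$ with $|A| = m - |\{\psi < \alpha\}|$ and setting $F = \{\psi < \alpha\} \cup A$ produces a set with $(E, F) \in \mathcal{F}_m$ and $W_p(E, F) = W_p(\Leb^d\measurerestr E, \mu)$, which is the desired minimizer. I expect the bang-bang step to be the delicate one, since it requires carefully handling the possibility that $\{\psi = \alpha\}$ has positive measure, but the scalar nature of the mass constraint makes the argument robust.
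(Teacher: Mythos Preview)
The paper's proof is a two-line citation: it rescales to $m=1$ and invokes Theorem~3.10 of Buttazzo--Carlier--Laborde with $\mu=\Leb^d\measurerestr E$ and $\phi\equiv 1$. Your approach is instead a from-scratch direct-method argument that amounts to reproving that cited theorem. The relaxation half is sound: the moment bound yields tightness, the narrow limit inherits the density constraint and the singularity with $E$ via the Portmanteau arguments you describe, and lower semicontinuity of $W_p$ under narrow convergence (no upgrade to $W_p$-convergence is needed, and a mere $p$-moment bound would not give one anyway) produces a minimizer $\nu^*=f^*\Leb^d$ in the relaxed class.

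The bang-bang step is where your sketch is genuinely incomplete. The phrase ``linear in $\nu$ once the plan is fixed'' hides the difficulty: the optimal plan depends on $\nu$. What first-order optimality for the convex functional $\nu\mapsto W_p^p(\Leb^d\measurerestr E,\nu)$ together with the bathtub principle does give is that $f^*=1$ on $\{\psi<\alpha\}\setminus E$ and $f^*=0$ on $\{\psi>\alpha\}$, exactly as you claim. But replacing $f^*$ on the level set $\{\psi=\alpha\}$ by a characteristic function and asserting $W_p(E,F)=W_p(\Leb^d\measurerestr E,\nu^*)$ does not follow: the subgradient inequality only yields $W_p^p(\Leb^d\measurerestr E,\Leb^d\measurerestr F)\geqslant W_p^p(\Leb^d\measurerestr E,\nu^*)$, which is the wrong direction. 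The ``scalar mass constraint'' does not rescue this, because the obstruction lies not in the constraint structure but in the dependence of the optimal coupling on the target. Closing this gap when $|\{\psi=\alpha\}|>0$ is precisely the substantive content of Theorem~3.10 in \cite{buttazzo2017wasserstein}, so your plan ultimately rests on the same external result the paper cites.
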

\begin{proof}
 Without loss of generality, we may assume that $m=1$ by applying the scaling argument \eqref{eq: scaling}. Then the existence of a minimizer $F$ follows from Theorem 3.10 in \cite{buttazzo2017wasserstein} (as re-stated below ) with $\mu=\Leb^d\measurerestr E$, $\phi\equiv 1$ and the observation that $\Leb^d\measurerestr \widetilde{F} \in \mathcal{A}_\phi$ whenever $(E,\widetilde{F})\in \mathcal{F}_m$.
\end{proof}

Here, we restate Theorem 3.10 in \cite{buttazzo2017wasserstein}
with minor modifications in notations:

\begin{theorem}[Theorem 3.10 in \cite{buttazzo2017wasserstein}]\label{thm: buttazzo2017thm310}
For any $\mu\in \mathcal{P}_c(\R^d)$, given a nonnegative integrable function $\phi(x)$ on $\R^d$ with $\int_{\R^d\setminus \textrm{supp}(\mu)}\phi(x)\diff{x}>1$, let $\mathcal{A_{\phi}}$ denote a collection of measures, defined by
\[\mathcal{A}_{\phi}=\left\{\nu\in \mathcal{P}_c: \nu\perp\mu,\frac{\diff{\nu}}{\diff{x}}\leqslant \phi\right\}.\]
Then there exists a set $A$ such that the measure $\nu_0:=\phi \Leb^d\measurerestr {A}$ is in $\mathcal{A}_\phi$ and satisfies
\[W_p(\mu,\nu_0)=\min_{\nu\in A_{\phi}}W_p(\mu,\nu).\]
\end{theorem}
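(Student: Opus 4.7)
The plan is to combine the direct method in the calculus of variations with Kantorovich duality and a bathtub-principle rearrangement. First I would produce some minimizer $\nu_{*}=f\,\Leb^d\in\mathcal{A}_\phi$ of $\nu\mapsto W_p(\mu,\nu)$ without insisting on its specific form; then, using that any such minimizer must saturate the density constraint on a sublevel set of a Kantorovich potential, I would upgrade $\nu_*$ to the claimed form $\nu_0=\phi\,\Leb^d\measurerestr A$.

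For existence, take a minimizing sequence $\nu_n=f_n\,\Leb^d\in\mathcal{A}_\phi$. Since $0\leqslant f_n\leqslant\phi$ with $\phi\in L^1(\R^d)$, the family $(f_n)$ is equi-integrable and hence weakly relatively compact in $L^1$ by Dunford--Pettis; extract a subsequence with $f_n\rightharpoonup f$ in $L^1$. The bound $0\leqslant f\leqslant\phi$ passes to the weak limit, as does the linear constraint that $f$ vanish on the set where the absolutely continuous part of $\mu$ has positive density (which is exactly what $\nu\perp\mu$ reduces to once $\nu$ is itself absolutely continuous). Uniform boundedness of $W_p(\mu,\nu_n)$ together with the compact support of $\mu$ gives uniform $p$-th moment bounds on $\nu_n$ and hence tightness, so $\int f\,\diff{x}=1$. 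Lower semicontinuity of $W_p$ under narrow convergence then shows $\nu_{*}:=f\,\Leb^d$ is a minimizer in $\mathcal{A}_\phi$.

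For the structural claim, I would invoke Kantorovich duality for the cost $c(x,y)=\abs{x-y}^p$: there exist potentials $(\varphi,\psi)$ with $\varphi(x)+\psi(y)\leqslant\abs{x-y}^p$ realizing $W_p^p(\mu,\nu_*)=\int\varphi\,\diff{\mu}+\int\psi\,\diff{\nu_*}$. For any competitor $\nu=g\,\Leb^d\in\mathcal{A}_\phi$, weak duality yields
\[W_p^p(\mu,\nu)-W_p^p(\mu,\nu_*)\;\geqslant\;\int\psi(y)\bigl(g(y)-f(y)\bigr)\,\diff{y},\]
so $f$ must solve the linear program of minimizing $g\mapsto\int\psi g\,\diff{y}$ subject to $0\leqslant g\leqslant\phi$, $\int g\,\diff{y}=1$, and $g\equiv 0$ on the set where $\mu_{ac}$ has positive density. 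The bathtub principle applied to this constrained linear program forces $f=\phi\,\mathbf{1}_A$, where $A$ is, up to a Lebesgue-null modification, a sublevel set $\{\psi<c\}$ intersected with the complement of the support of $\mu_{ac}$, possibly augmented by a measurable portion of the level set $\{\psi=c\}$ chosen to enforce $\int_A\phi\,\diff{x}=1$. This yields the desired representation.

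The main obstacle I anticipate is the stability of the mutual singularity $\nu\perp\mu$ under passage to the weak limit: in general, $\nu\perp\mu$ is not preserved by narrow convergence of measures. The observation that unblocks the proof is that on $\mathcal{A}_\phi$ every $\nu$ is already absolutely continuous with density dominated by $\phi$, so $\nu\perp\mu$ collapses to the closed linear condition that $f$ vanish where $\mu_{ac}$ has positive density, and this survives weak $L^1$ convergence. A smaller technical point is that the hypothesis $\int_{\R^d\setminus\textrm{supp}(\mu)}\phi\,\diff{x}>1$ is used twice: once to ensure $\mathcal{A}_\phi\neq\emptyset$, and again at the bathtub step to guarantee that a finite threshold $c$ exists realizing the total-mass constraint.
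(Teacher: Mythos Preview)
The paper does not prove this theorem at all: it is quoted verbatim (up to notation) from Buttazzo--Carlier--Laborde and used as a black box in the proof of Lemma~\ref{lem: exist_F}. So there is no ``paper's own proof'' to compare against; what follows is an assessment of your sketch on its own merits.

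Your existence step is essentially sound. Dunford--Pettis applied to $0\leqslant f_n\leqslant\phi$ gives weak $L^1$ compactness, the constraints $0\leqslant f\leqslant\phi$ and $f=0$ on $\{\rho_{\mu_{ac}}>0\}$ are convex and strongly closed hence weakly closed, and weak $L^1$ convergence already preserves total mass, so the separate tightness remark is redundant. One genuine loose end: the limit $f\,\Leb^d$ need not have compact support, so it may fall outside $\mathcal{A}_\phi\subseteq\mathcal{P}_c$ as stated; you should either enlarge the admissible class or argue that the infimum is unchanged if one restricts to a fixed large ball.

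The structural step has a real gap. From Kantorovich duality you correctly obtain, for every admissible $g$,
\[
W_p^p(\mu,g\,\Leb^d)-W_p^p(\mu,\nu_*)\;\geqslant\;\int\psi\,(g-f)\,\diff{y},
\]
and optimality of $\nu_*$ makes the left side nonnegative. But that does \emph{not} force the right side to be nonnegative, so the sentence ``so $f$ must solve the linear program'' is unjustified as written. Equivalently: your inequality shows $\psi$ lies in the subdifferential of $W_p^p(\mu,\cdot)$ at $\nu_*$, but optimality over the convex set $\mathcal{A}_\phi$ only guarantees that \emph{some} element of that subdifferential satisfies the variational inequality $\int\psi'(g-f)\,\diff{y}\geqslant 0$ for all admissible $g$; the particular Kantorovich potential you fixed need not be that element unless you know it is unique. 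The standard repair is to invoke a saddle-point argument (Sion/Ky Fan minimax on the convex--concave Lagrangian $\int\varphi\,\diff{\mu}+\int\psi\,\diff{\nu}$ over $\mathcal{A}_\phi\times\{\varphi\oplus\psi\leqslant c\}$), which simultaneously produces a minimizing $\nu_0$ and a potential $\psi^*$ for which $\nu_0$ is the bathtub solution. With that in place, the bathtub description of $A$ as a sublevel set of $\psi^*$ (plus a slice of the level set to hit mass one) goes through exactly as you describe.
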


\begin{definition}
For any bounded Lebesgue measurable set $E\subseteq \R^d$ and $p\geqslant 1$, let $m\defeq \abs{E}$ and define the Wasserstein functional on $E$ by
\begin{equation}
  \label{eq: W_fun}
\W_p(E):=\min \{W_p(E,\widetilde{F}):~ (E,\widetilde{F})\in \mathcal{F}_m\}.
\end{equation}
\end{definition}

By the scaling argument \eqref{eq: scaling}, it follows that
\begin{align}\label{eq: W_rescale}
  \W_p(rE)=r^{1+\frac{d}{p}}\W_p(E).
\end{align}


\begin{lemma}\label{lem: prop_Wp}
For any bounded Lebesgue measurable set $E\subseteq\R^d$ and $p\geqslant 1$, let $F$ denote a $\W_p$-minimizer of $E$ and $\Phi$ denote an optimal transport map that transports $E$ to $F$. Then there is a constant $C_0(d)=(3^{1/d}+2)\ell_d$ such that
\begin{enumerate}[label=(\alph*)]
    \item For a.e. $x\in E$
    \begin{equation}\label{eq: transport_dis}
        \abs{\Phi(x)-x}\leqslant C_0(d)\abs{E}^{1/d}.
    \end{equation}
    \item 
    \begin{equation}\label{eq: Wasserstein_uni_bd}
        \W_p(E)\leqslant C_0(d)\abs{E}^{\frac{1}{p}+\frac{1}{d}}.
    \end{equation}
    \item
    \begin{equation}\label{eq: F_bnd}
    \abs{F\setminus \left\{y\in\R^d: \textrm{dist}(y,E)\leqslant C_0(d)\abs{E}^{1/d}\right\}}=0.
    \end{equation}
\end{enumerate}
\end{lemma}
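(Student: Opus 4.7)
The plan is to prove part (a) first, from which (b) and (c) follow immediately.

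Granting (a), part (b) is obtained from
\[\W_p(E)^p = \int_E \abs{\Phi(x)-x}^p \diff{x} \leqslant \bigl(C_0(d)\abs{E}^{1/d}\bigr)^p \abs{E}\]
by taking the $p$-th root. Part (c) follows because for a.e.~$x \in E$ one has $\textrm{dist}(\Phi(x), E) \leqslant \abs{\Phi(x) - x} \leqslant C_0(d)\abs{E}^{1/d}$, and $\Phi_{\#}(\Leb^d \measurerestr E) = \Leb^d \measurerestr F$, so $F$ is supported (up to a null set) in the claimed neighborhood of $E$.

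For (a), I argue by contradiction using a local rearrangement of the target. Set $m := \abs{E}$ and assume that
\[A := \left\{x\in E: \abs{\Phi(x)-x}>C_0(d)m^{1/d}\right\}\]
has positive Lebesgue measure. By the Lebesgue density theorem, one can select a density point $x_0$ of $A$ and a radius $\rho \in (0, 2\ell_d m^{1/d})$ so that $A_\rho := A\cap B_\rho(x_0)$ has positive measure. Setting $R := 3^{1/d}\ell_d m^{1/d}$ gives $\abs{B_R(x_0)}=3m$, and since $\abs{B_R(x_0) \cap (E \cup F)} \leqslant 2m$, one can pick a measurable set $G \subseteq B_R(x_0) \setminus (E \cup F)$ with $\abs{G}=\abs{A_\rho}$. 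Define the competitor
\[F' := \bigl(F \setminus \Phi(A_\rho)\bigr) \cup G,\]
which lies in $\mathcal{F}_m$ by construction.

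I then build a transport plan from $E$ to $F'$ by keeping $\Phi$ on $E \setminus A_\rho$ and coupling $\Leb^d\measurerestr A_\rho$ with $\Leb^d\measurerestr G$ on the remaining mass (the product coupling $\frac{1}{\abs{A_\rho}}(\Leb^d\measurerestr A_\rho)\otimes(\Leb^d\measurerestr G)$ works). For every $x \in A_\rho \subseteq B_\rho(x_0)$ paired with $y \in G \subseteq B_R(x_0)$, the displacement satisfies
\[\abs{x - y} \leqslant R + \rho < (3^{1/d} + 2)\ell_d m^{1/d} = C_0(d) m^{1/d},\]
while on $A_\rho$ the original plan incurs displacement strictly greater than $C_0(d) m^{1/d}$ pointwise. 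Consequently the new plan has strictly smaller $L^p$-cost, giving $W_p(E, F') < W_p(E, F) = \W_p(E)$, contradicting the minimality of $F$. The choice of the constant $C_0(d)$ is precisely tuned so that this strict improvement is possible as soon as $\rho$ is chosen below $2\ell_d m^{1/d}$, which is always permissible.

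The main technical subtlety will be the measure-theoretic interpretation of $\Phi(A_\rho)$: for $p>1$ the optimal transport map exists and is essentially injective, so $F'$ is immediately a well-defined set of measure $m$; for $p=1$ one should work with optimal transport plans throughout and replace $\Phi(A_\rho)$ by the second marginal of the optimal plan restricted to $A_\rho \times \R^d$, which is a sub-measure of $\Leb^d\measurerestr F$ of the correct total mass. This is a bookkeeping issue rather than a genuine obstruction.
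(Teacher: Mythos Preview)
Your proposal is correct and follows essentially the same route as the paper's proof. The paper also argues (a) by contradiction: it localizes the ``bad'' set in a small ball around some $x_0$, uses the volume bound $\abs{B(x_0,3^{1/d}\ell_d m^{1/d})\setminus(E\cup F)}\geqslant m$ to find free space for the redirected mass, and then compares costs to obtain a strict improvement contradicting the minimality of $F$; parts (b) and (c) are deduced from (a) exactly as you indicate. The only cosmetic differences are that the paper (i) normalizes to $\abs{E}=1$, (ii) takes the localization radius $r\leqslant \ell_d$ (so that the new displacement is at most $(3^{1/d}+1)\ell_d$, with a full $\ell_d$ margin) rather than your $\rho<2\ell_d m^{1/d}$, and (iii) redirects mass via an optimal map $\Psi$ from $K\cap B(x_0,r)$ to $H$ instead of your product coupling; none of these affect the argument. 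Your closing remark on the $p=1$ case is a point the paper leaves implicit: its computation $\lvert\Phi(E\setminus(K\cap B(x_0,r)))\rvert=\lvert E\setminus(K\cap B(x_0,r))\rvert$ also tacitly uses essential injectivity of $\Phi$, and the clean fix in both approaches is to observe, via Theorem~\ref{thm: buttazzo2017thm310} with $\phi\equiv 1$, that $\W_p(E)$ equals the minimum of $W_p(\Leb^d\measurerestr E,\nu)$ over all $\nu\in\mathcal{A}_\phi$, so exhibiting a competitor \emph{measure} with density $\leqslant 1$ and strictly smaller cost already contradicts minimality.
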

\begin{proof}
Without loss of generality, by \eqref{eq: W_rescale} we may assume that $\abs{E}=1$.

Let $K=\left\{x\in E: \abs{\Phi(x)-x}>(3^{1/d}+2)\ell_d\right\}$. We want to show that $\abs{K}=0$. Indeed, assume $\abs{K}>0$, then there exists $x_0\in K$ such that for some $0<r\leqslant \ell_d$, we have $\abs{K\cap B(x_0,r)}>0$.

Since $\abs{K\cap B(x_0,r)}\leqslant \abs{B(x_0,r)}\leqslant \abs{B(x_0,\ell_d)}=1$ and
\[\abs{B(x_0,3^{1/d}\ell_0)\setminus (E\cup F)}\geqslant\abs{B(x_0,3^{1/d}\ell_d)}-\abs{E}-\abs{F}=3-1-1=1,\]
there exists a subset $H\subseteq B(x_0,3^{1/d}\ell_d)\setminus (E\cup F)$ with $\abs{H}=\abs{K\cap B(x_0,r)}$ and $\abs{H \cap (K\cap B(x_0,r))}=0$. Let $\Psi$ be an optimal transport map from $K\cap B(x_0,r)$ to $H$.

Now we construct a new mapping:

    \begin{equation*}
    \bar{\Phi}(x)=\left\{
    \begin{aligned}
    &\Phi(x)\qquad x\in E\setminus (K\cap B(x_0,r));\\
    &\Psi(x)\qquad x\in K\cap B(x_0,r).
    \end{aligned}
    \right.
    \end{equation*}
    
By our construction, $\abs{\bar{\Phi}(E)\cap E}=0$. Note that for 
\[\abs{\bar{\Phi}(E)}=\abs{\Psi(K\cap B(x_0,r))}+\abs{\Phi(E\setminus (K\cap B(x_0,r)))}=\abs{K\cap B(x_0,r)}+\abs{E\setminus (K\cap B(x_0,r))}=\abs{E},\]
Thus $(E,\bar{\Phi}(E))\in \mathcal{F}_1$. Moreover, for a.e. $x\in K\cap B(x_0,r)$,
\begin{align*}
        \abs{\Psi(x)-x}&\leqslant\abs{\Psi(x)-x_0}+\abs{x_0-x}\\
        &\leqslant 3^{1/d}\ell_d+r\leqslant (3^{1/d}+1)\ell_d\\
        &<\abs{\Phi(x)-x}-\ell_d.
\end{align*}

Thus, since $\abs{K\cap B(x_0,r)}>0$, it holds that
\[\int_E \abs{\bar{\Phi}(x)-x}^p\diff{x}-\int_E\abs{\Phi(x)-x}^p\diff{x}=\int_{K\cap B(x_0,r)} \abs{\Psi(x)-x}^p\diff{x}-\int_{K\cap B(x_0,r)}\abs{\Phi(x)-x}^p\diff{x}<0.\]
This shows that 
\[W_p(E,\bar{\Phi}(E))<W_p(E,F),\]
a contradiction with $F$ being the $\W_p$-minimizer of $E$.

Hence for a.e. $x\in E$,
    \[\abs{\Phi(x)-x}\leqslant C_0(d)\abs{E}^{1/d}.\]
As a result,
\[\W_p(E)=W_p(E,F)=\left(\int_E\abs{\Phi(x)-x}^p\diff{x}\right)^{1/p}\leqslant C_0(d)\abs{E}^{\frac{1}{p}+\frac{1}{d}},\]
and 
\[\abs{F\setminus \left\{y\in \R^d : \textrm{dist}(y,E)\leqslant C_0(d)\abs{E}^{1/d}  \right \}}=0.\]

\end{proof}

\begin{lemma}[Lower semi-continuity of $\W_p$]\label{lem: lsc_Wp}
Suppose $\{E_n\}$ is any sequence of sets of finite perimeter in $\R^d$ with 
\[\sup_{n}P(E_n)<\infty\qquad\mathrm{and}\qquad E_n\subseteq B_R\]
for each $n$ and some $R>0$. If $E_n$ converges to $E$, then we have
\[\W_p(E)\leqslant \liminf_{n\to\infty}\W_p(E_n).\]
\end{lemma}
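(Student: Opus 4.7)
The strategy is to combine weak-\(*\) compactness of optimal transport plans with the relaxation Theorem~\ref{thm: buttazzo2017thm310} of Buttazzo--Carlier--Laborde. I may assume \(\liminf_n \W_p(E_n)<\infty\) (otherwise the inequality is trivial) and extract a subsequence realizing this liminf as an actual limit. For each \(n\), Lemma~\ref{lem: exist_F} supplies a minimizer \(F_n\) with \(\W_p(E_n)=W_p(E_n,F_n)\), and Lemma~\ref{lem: prop_Wp}(c) confines \(F_n\) to the \(C_0(d)\abs{E_n}^{1/d}\)-neighborhood of \(E_n\). Combined with \(E_n\subseteq B_R\) and \(\abs{E_n}\to\abs{E}\), this places every \(F_n\) in a single ball \(B_{R'}\), so all the action happens inside the compact set \(\overline{B_R}\times\overline{B_{R'}}\).

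Next I choose optimal plans \(\gamma_n\in\Gamma(\Leb^d\measurerestr E_n,\Leb^d\measurerestr F_n)\) with \(\int|x-y|^p\,d\gamma_n=\W_p(E_n)^p\). These are nonnegative Radon measures on \(\overline{B_R}\times\overline{B_{R'}}\) of uniformly bounded mass, so Banach--Alaoglu yields a weak-\(*\) subsequential limit \(\gamma_n\xrightharpoonup{*}\gamma\). Since \(E_n\to E\) in measure inside \(B_R\), \(\Leb^d\measurerestr E_n\to\Leb^d\measurerestr E\) in total variation, so the \(x\)-marginal of \(\gamma\) is \(\Leb^d\measurerestr E\); let \(\nu\) denote the \(y\)-marginal. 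Continuity and boundedness of \((x,y)\mapsto|x-y|^p\) on the compact product then yield
\[W_p(\Leb^d\measurerestr E,\nu)^p\leqslant \int|x-y|^p\,d\gamma=\lim_n\int|x-y|^p\,d\gamma_n=\lim_n\W_p(E_n)^p.\]
The crucial structural property of \(\nu\) is that \(F_n\cap E_n=\emptyset\) gives \(\Leb^d\measurerestr F_n+\Leb^d\measurerestr E_n\leqslant \Leb^d\); passing to the weak-\(*\) limit produces \(\nu+\Leb^d\measurerestr E\leqslant \Leb^d\), so \(\nu\perp \Leb^d\measurerestr E\), \(d\nu/d\Leb^d\leqslant 1\), and \(\nu(\R^d)=\abs{E}\).

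To finish, I upgrade the inequality to \(\W_p(E)\). After normalization by \(\abs{E}\), the measure \(\nu\) belongs to the admissible class of Theorem~\ref{thm: buttazzo2017thm310} with \(\phi\equiv 1\). That theorem produces a set \(F^*\) for which \(\Leb^d\measurerestr F^*\) realizes the minimum of \(W_p(\Leb^d\measurerestr E,\cdot)\) over this class; since this minimum over the measure class coincides with \(\W_p(E)\) (the minimum over sets \((E,\widetilde F)\in\mathcal{F}_{\abs{E}}\), by Lemma~\ref{lem: exist_F}), one obtains \(\W_p(E)\leqslant W_p(\Leb^d\measurerestr E,\nu)\leqslant \lim_n \W_p(E_n)\).

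The main delicate point is that the weak-\(*\) limit \(\nu\) of the characteristic-function measures \(\Leb^d\measurerestr F_n\) is generally not itself of the form \(\Leb^d\measurerestr F\): weak-\(*\) convergence can redistribute mass into fractional densities in \((0,1)\), so there is no direct way to extract a limiting set from the \(F_n\). Theorem~\ref{thm: buttazzo2017thm310} is precisely the tool that circumvents this obstruction, since it asserts that the infimum of \(W_p(\Leb^d\measurerestr E,\cdot)\) over the relaxed measure class is attained by a set-measure; enlarging the admissible class from sets to such measures therefore does not decrease the infimum. Everything else is routine compactness and lower-semicontinuity on a fixed compact product.
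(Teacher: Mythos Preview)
Your argument is correct, and it follows a genuinely different route from the paper's own proof. The paper stays within the class of \emph{sets} throughout: it invokes an external regularity result (Theorem~3.13 and Remark~3.14 of \cite{buttazzo2017wasserstein}) to obtain a uniform perimeter bound on the minimizers \(F_n\), then applies BV-compactness to extract a subsequence \(F_{n_k}\to F\) converging in measure, so that the limiting competitor \(F\) is automatically a set with \(|E\cap F|=0\) and the inequality \(\W_p(E)\leqslant W_p(E,F)\) is immediate. Your approach instead works at the level of transport plans and accepts that the weak-\(*\) limit \(\nu\) of \(\Leb^d\measurerestr F_n\) need not be a characteristic-function measure; you then recover the set-level inequality via the relaxation Theorem~\ref{thm: buttazzo2017thm310}, which asserts that the infimum over \(\mathcal{A}_\phi\) is attained by a set-measure and hence equals \(\W_p(E)\). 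The trade-off is that the paper's route is more direct once the perimeter regularity of \(F_n\) is granted, whereas your route is more self-contained within the statements already recorded in the paper (you use only Theorem~\ref{thm: buttazzo2017thm310} and Lemma~\ref{lem: prop_Wp}, not the additional Theorem~3.13/Remark~3.14) and would apply equally well in settings where no perimeter control on the \(F_n\) is available. One small point worth making explicit in a final write-up: the ``normalization by \(|E|\)'' is the spatial rescaling \(x\mapsto |E|^{-1/d}x\) (as in the proof of Lemma~\ref{lem: exist_F}), not division of the measure, so that the rescaled \(\nu\) has density bounded by \(1\) and Theorem~\ref{thm: buttazzo2017thm310} applies with \(\phi\equiv 1\); and the case \(|E|=0\) is trivial.
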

\begin{proof}
By the definition of $\liminf$, up to extracting a subsequence of $\left\{E_n\right\}$ if necessary (still denoted by $\left\{E_n\right\}$), we may assume that 
\[\lim_{n\to\infty}\W_p(E_n)=\liminf_{n\to\infty}W_p(E_n).\]

Let $F_n$ denote corresponding $\W_p$-minimizer of $E_n$ such that $\W_p(E_n)=W_p(E_n,F_n)$. By Theorem 3.13 and Remark 3.14 in \cite{buttazzo2017wasserstein}, $F_n$ is also a set of finite perimeter with a uniform bound on its perimeter. Furthermore, by \eqref{eq: F_bnd} in \prettyref{lem: prop_Wp}, $\left\{F_n\right\}$ are contained in $B_{R'}$ for $R'=R+C_0(d)\abs{E}^{1/d}$. Thanks to the compactness of sets of finite perimeter, there exists a set $F$ of finite perimeter in $B_{R'}$ and a subsequence $\{F_{n_k}\}$ such that $F_{n_k}\to F$.

Since $W_p$ is lower semi-continuous with respect to weak convergence, we have
\[W_p(E,F)\leqslant \liminf_{n\to\infty} W_p(E_{n_k},F_{n_k}).\]
For any $k$,
\[E\cap F\subseteq (E\setminus E_{n_k})\cup (F\setminus F_{n_k}) \cup (E_{n_k}\cap F_{n_k}),\]
which yields that $\abs{E\cap F}=0$. Therefore,
\[\W_p(E)\leqslant W_p(E,F)\leqslant \liminf_{k\to\infty} W_p(E_{n_k},F_{n_k})=\liminf_{k\to\infty}\W_p(E_{n_k})=\lim_{n\to\infty}\W_p(E_n)=\liminf_{n\to\infty}W_p(E_n).\]

\end{proof}

\section{Existence of minimizing sequence of bounded sets}\label{sec: bnd_seq}
In this section, we will prove the following theorem:
\begin{theorem}\label{thm: alter_bound_seq}
There exists a minimizing sequence of bounded sets to problem (\ref{problem: m}).
\end{theorem}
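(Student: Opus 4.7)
The plan is to start from an arbitrary minimizing sequence $\{(E_n, F_n)\} \subseteq \mathcal{F}_m$ for problem \eqref{problem: m} and to produce from it a new minimizing sequence $\{(\widetilde{E}_n, \widetilde{F}_n)\} \subseteq \mathcal{F}_m$ in which both $\widetilde{E}_n$ and $\widetilde{F}_n$ are contained in a single ball of radius depending only on $d$, $p$ and $m$. The construction follows the ``covering--packing'' strategy described in the introduction.

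The starting observation is that a pair of two disjoint balls of volume $m$ is admissible in \eqref{problem: m}, so the infimum is finite and $\sup_n \left(P(E_n)+W_p(E_n,F_n)\right)<\infty$; in particular $\sup_n P(E_n)<\infty$. For the \emph{covering step}, I would apply Almgren's Nucleation Lemma (Lemma~29.10 in \cite{maggi2012sets}) to each $E_n$: there are constants $c_0=c_0(d,m)>0$, $N=N(d,m)$ and $r=r(d,m)$, together with points $x_n^1,\dots,x_n^N\in\R^d$, such that the balls $B(x_n^j,r)$ are pairwise disjoint and
\[\sum_{j=1}^N \abs{E_n\cap B(x_n^j,r)}\geqslant c_0\,m.\]
One then replaces $E_n$ by a truncation $E_n'\subseteq \bigcup_{j=1}^N B(x_n^j,R)$ at a slightly enlarged radius $R$, controlling the perimeter increase by the relative perimeter of $E_n$ outside these balls (this is the content of \prettyref{prop: perimeter_inc}). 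An iterative argument together with a small rescaling to recover $\abs{E_n'}=m$ then produces a sequence whose mass is, up to a vanishing error, confined to a bounded number of balls of bounded radius.

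For the \emph{packing step}, given $E_n'$ supported in disjoint balls $B(x_n^j,R)$ with $j=1,\dots,N$, I translate each piece $E_n'\cap B(x_n^j,R)$ by a vector $v_n^j$ so that the translated pieces sit disjointly inside a common ball $B(0,\rho)$ with $\rho=\rho(d,m)$ independent of $n$; this is the role of \prettyref{thm: rearrange}. Translations preserve perimeter and, since the translated pieces remain mutually disjoint, the total perimeter of the packed set $\widetilde{E}_n$ equals $P(E_n')$. To handle the Wasserstein term, let $\Phi_n$ be an optimal transport map from $E_n'$ to an associated $\W_p$-minimizer $F_n'$, and simultaneously translate the image $\Phi_n(E_n'\cap B(x_n^j,R))$ by the same $v_n^j$. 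Because translation is a Euclidean isometry, the induced transport plan has the same $p$-cost; \prettyref{lem: prop_Wp}(c) ensures that each translated piece of $F_n'$ stays within distance $C_0(d)\,m^{1/d}$ of the corresponding translated piece of $E_n'$, so $\widetilde{F}_n$ is automatically contained in the enlarged ball $B(0,\rho+C_0(d)\,m^{1/d})$ and the disjointness $\abs{\widetilde{E}_n\cap\widetilde{F}_n}=0$ is preserved.

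The main obstacle is this joint relocation of matched pieces of $E_n$ and $F_n$: one must simultaneously translate the two sets while keeping them mutually disjoint and preserving the transport cost. \prettyref{lem: prop_Wp} is exactly what makes this possible, because it forces the optimal transport map to be ``local'', so that the $F$-mass matched to a piece of $E_n'$ sits in a fixed neighborhood of that piece and can be translated rigidly with it. Once the two steps are carried out, $(\widetilde{E}_n,\widetilde{F}_n)\in\mathcal{F}_m$, both sets are uniformly bounded, and
\[P(\widetilde{E}_n)+W_p(\widetilde{E}_n,\widetilde{F}_n)\leqslant P(E_n')+W_p(E_n',F_n')\leqslant P(E_n)+W_p(E_n,F_n)+o(1),\]
so $\{(\widetilde{E}_n,\widetilde{F}_n)\}$ is a minimizing sequence for \eqref{problem: m} consisting of bounded sets. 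The output can then be recast, via \prettyref{lem: exist_F}, as a minimizing sequence for the single-variable functional $P(E)+\W_p(E)$, setting the stage for problem \eqref{problem: T}.
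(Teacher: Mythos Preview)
Your covering--packing strategy is exactly the paper's, and the way you invoke \prettyref{lem: prop_Wp} to keep the $F$-mass local during translation is correct in spirit. However, there is an overclaim that constitutes a real gap.

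You assert that the resulting pair $(\widetilde{E}_n,\widetilde{F}_n)$ lies in a ball of radius $\rho=\rho(d,p,m)$ \emph{independent of $n$}. The paper explicitly notes (in the Remark immediately after the statement) that this is \emph{not} what Theorem~\ref{thm: alter_bound_seq} yields; uniform boundedness is only obtained later, in \prettyref{thm: comp_bdset}, under the extra hypothesis that $m$ is small, and via a completely different mechanism (the quantitative isoperimetric inequality). Your route cannot give a uniform radius: the Nucleation Lemma with a fixed $\varepsilon$ indeed produces $N=N(d,m,\sup_nP(E_n))$ balls covering a fixed fraction of the mass, but to make the leftover mass $o(1)$---which you need in order to remain a minimizing sequence---you must let $\varepsilon=\varepsilon_n\to0$, and then $\#I_n$ grows like $\varepsilon_n^{-d}$, so the packing radius $R_{\varepsilon_n}$ blows up with $n$. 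Your ``iterative argument'' does not circumvent this: each iteration adds a batch of balls, and driving the residual mass to zero forces the total number of balls (hence the containing radius) to diverge.

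The paper's proof simply accepts this. It applies \prettyref{thm: rearrange} once per $n$ with $\varepsilon_n\leqslant 1/n$, obtaining bounded sets in $B(O,R_{\varepsilon_n})$ with $R_{\varepsilon_n}$ depending on $n$, and absorbs the $O(1/n)$ errors in perimeter and Wasserstein cost. One further cosmetic difference: the paper does not rescale to restore $\abs{E_n'}=m$; instead it replaces the leftover mass $E_0^\varepsilon$ by an explicit concentric ball--annulus pair $(\widetilde{E}_0^\varepsilon,\widetilde{F}_0^\varepsilon)$ of the correct volume. Your rescaling would also work, but it is unnecessary once you drop the uniform-radius claim.
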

We will show that for any minimizing sequence $(E_n, F_n)$ to (\ref{problem: m}), there is an alternative minimizing sequence $(\widetilde{E_n}, \widetilde{F_n})$ of bounded sets to (\ref{problem: m}).
\begin{remark}
Here, $(\widetilde{E_n}, \widetilde{F_n})$ is not necessarily uniformly bounded.
\end{remark}

We first start with an important lemma, originating from Almgren's breakthrough work in \cite{Almgren1976Existence}, and rephrased in \cite{maggi2012sets}:

\begin{lemma}[Nucleation, \cite{maggi2012sets}]
\label{lem: nucleation}
For every $d\geqslant 2$, there exists a positive constant $c(d)$ with the following property: given any set $E\subseteq \R^d$ of finite perimeter with $0<\abs{E}<\infty$, and any positive number $\varepsilon$ with
$\varepsilon\leqslant \min\{\abs{E}, \frac{P(E)}{2dc(d)}\}$, there exists a finite family of points $I\subseteq \R^d$ such that:
\[
\abs{E\setminus \bigcup_{x\in I}B(x,2)}<\varepsilon\quad\textrm{and}\quad
\abs{E\cap B(x,1)}\geqslant \left(c(d)\frac{\varepsilon}{P(E)}\right)^{d},\quad \forall x\in I.
\]

Moreover, $\abs{x-y}>2$ for every $x,y\in I, x\neq y$, and 
$$\# I \leqslant \abs{E}\left(\frac{P(E)}{c(d)\varepsilon}\right)^d.$$
\end{lemma}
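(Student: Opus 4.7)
My plan is to construct $I$ via a greedy (equivalently, Zorn-style maximal) procedure. Define the continuous function $f(x) := \abs{E \cap B(x, 1)}$ on $\R^d$ and the threshold $t := (c(d)\varepsilon / P(E))^d$, with the dimensional constant $c(d)$ to be calibrated at the end. I would let $I$ be a maximal finite collection of points in $\R^d$ satisfying $\abs{x_i - x_j} > 2$ for distinct $x_i, x_j \in I$ together with $f(x_i) \geqslant t$ for each $x_i \in I$. Such an $I$ exists by iteratively selecting $x_{k+1}$ to maximize $f$ on the current uncovered region $\R^d \setminus \bigcup_{i \leqslant k} \overline{B(x_i, 2)}$ (as long as the supremum is still $\geqslant t$); the process terminates in finitely many steps by the cardinality bound established next.

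The cardinality bound is the easy part: since $\abs{x_i - x_j} > 2$, the unit balls $\{B(x_i, 1) : x_i \in I\}$ are pairwise disjoint, and each carries mass $\abs{E \cap B(x_i, 1)} \geqslant t$, so summing over $i$ yields $\#I \cdot t \leqslant \abs{E}$, i.e., $\#I \leqslant \abs{E}\left(P(E)/(c(d)\varepsilon)\right)^d$, matching the claim.

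The crux is the remainder estimate $\abs{E \setminus \bigcup_{x \in I} B(x, 2)} < \varepsilon$. For any $z$ in this open remainder, adjoining $z$ to $I$ would not violate the distance condition (since $\abs{z - x_i} > 2$ for every $i$), so maximality of $I$ forces $f(z) < t$. The hypothesis $\varepsilon \leqslant P(E)/(2d c(d))$ is precisely calibrated so that $t \leqslant \omega_d/2$, placing us in the regime where the relative isoperimetric inequality on $B(z, 1)$ yields
\[
\abs{E \cap B(z, 1)}^{(d-1)/d} \leqslant C_1(d)\, P(E; B(z, 1))
\]
for every such $z$. Feeding these bounds into a covering of the remainder by balls of radius $1$ with bounded overlap $K(d)$ (extracted either by Vitali selection from $\{B(z,1) : z \in E \setminus \bigcup_i B(x_i, 2)\}$ or from a fixed ambient lattice) produces, after a Hölder-type step using $\abs{E \cap B(z,1)} \leqslant t$ in the form $\abs{E \cap B(z,1)} \leqslant t^{1/d}\, \abs{E \cap B(z,1)}^{(d-1)/d}$, an estimate of the form
\[
\abs{E \setminus \bigcup_{x \in I} B(x, 2)} \leqslant C_2(d)\, t^{1/d}\, P(E).
\]
Substituting $t^{1/d} = c(d)\varepsilon / P(E)$ collapses the right-hand side to $C_2(d) c(d) \varepsilon$, so the estimate closes as soon as $c(d)$ is fixed with $C_2(d)\, c(d) < 1$.

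The principal technical nuisance is that the remainder $E \setminus \bigcup_i B(x_i, 2)$ is not literally a disjoint union of unit balls whose centers satisfy $f < t$, so the covering step requires a small buffer and careful accounting of contributions from lattice centers near $\partial \bigcup_i B(x_i, 2)$. This bookkeeping is what determines the explicit values of the dimensional constants $C_1(d), C_2(d), K(d)$, and through them the admissible range of $c(d)$. It is also the step where the assumption $d \geqslant 2$ is essential, since the exponent $(d-1)/d$ in the relative isoperimetric inequality degenerates in dimension one; otherwise no new idea is needed beyond the interplay between the maximality of $I$ and the one-sided relative isoperimetric inequality.
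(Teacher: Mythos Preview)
The paper does not supply its own proof of this statement: it is quoted verbatim as Lemma~29.10 (the Nucleation Lemma) from Maggi's book, with attribution to Almgren, and is used as a black box. So there is no in-paper argument to compare against.

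That said, your sketch is essentially the standard proof one finds in the cited reference. The architecture---greedy/maximal selection of $2$-separated centers where $\abs{E\cap B(x,1)}$ exceeds a threshold $t$, the trivial cardinality bound from disjointness of the unit balls, and the remainder estimate via the relative isoperimetric inequality combined with a bounded-overlap covering and the factorization $\abs{E\cap B}\leqslant t^{1/d}\abs{E\cap B}^{(d-1)/d}$---is exactly the mechanism used there. Two minor points worth tightening if you write it out in full: (i) for $z$ in the remainder $E\setminus\bigcup_i B(x_i,2)$ you only get $\abs{z-x_i}\geqslant 2$, not strict inequality, so the maximality argument needs the (harmless) observation that the spheres $\partial B(x_i,2)$ have measure zero; and (ii) your claim that the hypothesis $\varepsilon\leqslant P(E)/(2dc(d))$ is ``precisely calibrated'' to force $t\leqslant\omega_d/2$ is a slight overstatement---it gives $t\leqslant(2d)^{-d}$, which is comfortably below $\omega_d/2$ for all $d\geqslant 2$, but the match is not sharp. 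Neither point affects the validity of the argument.
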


Using this lemma, we have the following proposition:
\begin{proposition}\label{prop: perimeter_inc}
Let $E\subseteq\R^d$ be a set of finite perimeter with $\abs{E}<\infty$ and $d\geqslant 2$. For any number $0<\varepsilon\leqslant \min\{\abs{E}, \frac{P(E)}{2dc(d)}\}$, there exists a finite subset $I\subseteq \R^d$ with 
\[\# I \leqslant \abs{E}\left(\frac{P(E)}{c(d)\varepsilon}\right)^d\] such that for some number $r\in [2,3]$, the set 
\[U:=\bigcup_{x\in I}B(x,r)\]
satisfies
\[
\abs{E\setminus U}<\varepsilon \quad\textrm{and}\quad \mathcal{H}^{d-1}(E\cap \partial U)\leqslant \varepsilon.    
\]
\end{proposition}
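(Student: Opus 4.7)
The plan is to reuse the point configuration $I\subseteq\R^d$ produced by applying \prettyref{lem: nucleation} (Nucleation) to $E$ at threshold $\varepsilon$. The cardinality bound $\#I\leqslant \abs{E}(P(E)/(c(d)\varepsilon))^d$ is then inherited directly from \prettyref{lem: nucleation}. For any radius $r\geqslant 2$, writing $U_r:=\bigcup_{x\in I}B(x,r)$, the monotonic inclusion $U_r\supseteq U_2$ combined with the Nucleation conclusion $\abs{E\setminus U_2}<\varepsilon$ gives the volume estimate $\abs{E\setminus U_r}<\varepsilon$ automatically. So the only remaining task is to choose $r\in[2,3]$ for which the interface $\partial U_r$ meets $E$ in small $(d-1)$-dimensional measure, and the idea is to average over the one-parameter family $r\in[2,3]$.

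The key device is the $1$-Lipschitz distance function $f(x):=\mathrm{dist}(x,I)$, for which $U_r=\{f<r\}$ and hence $\partial U_r\subseteq\{f=r\}$. Since $|\nabla f|\leqslant 1$ almost everywhere, the coarea formula for Lipschitz functions (applied to $\mathbf{1}_E$) yields
\[
\int_{2}^{3}\mathcal{H}^{d-1}\bigl(E\cap\{f=r\}\bigr)\,\diff{r}
\;=\;\int_{E\cap\{2<f<3\}}|\nabla f|\,\diff{x}
\;\leqslant\;\abs{E\cap\{f>2\}}
\;\leqslant\;\abs{E\setminus U_2}
\;<\;\varepsilon.
\]
Because the interval $[2,3]$ has length one, the set of radii where the integrand exceeds $\varepsilon$ has Lebesgue measure strictly less than $1$, so a standard pigeonhole argument on the distribution function of this non-negative integrand produces at least one radius $r_\ast\in[2,3]$ with $\mathcal{H}^{d-1}(E\cap\{f=r_\ast\})\leqslant\varepsilon$. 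Setting $U:=U_{r_\ast}$ and using $\partial U\subseteq\{f=r_\ast\}$ delivers the desired boundary estimate $\mathcal{H}^{d-1}(E\cap\partial U)\leqslant\varepsilon$, while the volume estimate $\abs{E\setminus U}<\varepsilon$ is free from the monotone argument above.

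I do not expect any serious obstacle. The only point worth flagging is the justification of the coarea formula applied to the (non-smooth) $1$-Lipschitz function $f$ paired with the indicator $\mathbf{1}_E$; this is Federer's classical coarea theorem (see, e.g., Theorem 2.93 in Ambrosio--Fusco--Pallara), and the finite-perimeter hypothesis on $E$ is in fact much stronger than the mere Lebesgue measurability required for that identity. The whole argument therefore amounts to one invocation of \prettyref{lem: nucleation} plus a one-line averaging via coarea.
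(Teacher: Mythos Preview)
Your proposal is correct and follows essentially the same route as the paper: apply the Nucleation Lemma to obtain $I$, introduce the distance function $f(y)=\mathrm{dist}(y,I)=\min_{x\in I}|y-x|$, and use the coarea formula on $[2,3]$ to select a good radius $r$. The only cosmetic difference is that the paper uses $|\nabla f|=1$ a.e.\ (true here since $I$ is finite) and writes the coarea identity as an equality $|A|=\int_2^\infty \mathcal{H}^{d-1}(E\cap f^{-1}(t))\,\diff t$, whereas you use the one-sided bound $|\nabla f|\leqslant 1$; both lead to the same pigeonhole conclusion.
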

\begin{proof}
By \prettyref{lem: nucleation}, there exists a finite set $I\subseteq \R^d$ such that:
$$\abs{E\setminus \bigcup_{x\in I}B(x,2)}< \varepsilon\qquad\textrm{and}\qquad\qquad \# I \leqslant \abs{E}\left(\frac{P(E)}{c(d)\varepsilon}\right)^d.$$

We now consider the function $f:\R^d\rightarrow \R$ defined by
\[f(y):=\min_{x\in I} \abs{y-x}, \]
which gives the distance from the point $y$ to the finite set $I$. It is a Lipschitz function with $\abs{\nabla f(y)}=1$ for a.e. $y$ in $\R^d$. Using this function, we see that
\[A:=E\setminus \bigcup_{x\in I}B(x,2)=E\cap f^{-1}([2,\infty)).\]

According to the coarea formula (see Theorem 1 in Section 3.4.2 in \cite{Evans1992Measure}): 
\[
\int_A \abs{\nabla f(y)} d\Leb^d(y)=\int_{\R}\mathcal{H}^{d-1}(A\cap f^{-1}(t))\diff{t}.
\]
That is,
\[
\abs{A}=\int_{2}^{\infty}\mathcal{H}^{d-1}(E\cap f^{-1}(t))\diff{t}.
\]
Since $\abs{A}<\varepsilon$, in particular it follows that
\[
\int_{2}^{3}\mathcal{H}^{d-1}(E\cap f^{-1}(t))\diff{t}\leqslant\abs{A}<\varepsilon.
\]
As a result, there exists a $r\in [2,3]$ such that
\[\mathcal{H}^{d-1}(E\cap f^{-1}(r))<\epsilon.\]
Now, for the set $U:=\bigcup_{x\in I}B(x,r)$, it holds that
\[
\abs{E\setminus U}\leqslant \abs{E\setminus \bigcup_{x\in I}B(x,2)}<\varepsilon\quad\textrm{and}\quad \mathcal{H}^{d-1}(E\cap \partial U)=\mathcal{H}^{d-1}(E\cap f^{-1}(r))\leqslant \varepsilon.    
\]
\end{proof}

\begin{theorem}\label{thm: rearrange}
For any $m>0$, $(E,F)\in \mathcal{F}_m$, and 
\begin{equation}\label{eqn: epsilon_bound}
    0<\varepsilon\leqslant \min\left\{\abs{E},~  \frac{P(E)}{2dc(d)}\right\},
\end{equation}
there exists  $(\widetilde{E},\widetilde{F})\in \mathcal{F}_m$ such that
\[ P(\widetilde{E})\leqslant P(E)+2\varepsilon, \qquad W_p(\widetilde{E},\widetilde{F})\leqslant W_p(E,F)+ \left(\frac{2}{\omega_d}\right)^{1/d} \varepsilon ^{\frac{1}{p}+\frac{1}{d}},
\]
and $(\widetilde{E},\widetilde{F})\in \mathcal{F}$ are bounded sets inside the ball $B(O, R_\varepsilon)$ 
where $O=(0,\cdots,0)$ is the origin in $\R^d$,
\[R_\varepsilon:=\left(6 \left(\frac{P(E)}{c(d)\varepsilon}\right)^d+C_0(d) \left(\frac{P(E)}{c(d)\varepsilon}\right)^{d-1}\right) \abs{E}+\left(\frac{2\varepsilon}{\omega_d}\right)^{1/d}.\]

\end{theorem}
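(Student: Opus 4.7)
The strategy is to implement the ``covering--packing'' scheme outlined in the introduction: first chop $E$ into pieces that essentially live inside disjoint balls of bounded radius, rigidly translate each piece together with the part of $F$ to which it is transported, and finally absorb the small residual mass into a single auxiliary ball. The plan is to first apply \prettyref{prop: perimeter_inc} to $E$ with the given $\varepsilon$, producing a finite set $I=\{x_1,\dots,x_N\}$ with $N \leqslant |E|(P(E)/(c(d)\varepsilon))^d$ and a radius $r\in[2,3]$ such that $U:=\bigcup_i B(x_i,r)$ satisfies $|E\setminus U|<\varepsilon$ and $\mathcal{H}^{d-1}(E\cap\partial U)\leqslant\varepsilon$. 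Disjointifying by $\widetilde{E}_i:=(E\cap B(x_i,r))\setminus\bigcup_{j<i}B(x_j,r)$ gives a partition $E\cap U=\bigsqcup_i \widetilde{E}_i$ with each $\widetilde{E}_i\subseteq B(x_i,r)$; combining $\partial^*(E\cap U)\subseteq (\partial^*E\cap U)\cup(\partial^* U\cap E)$ with $\mathcal{H}^{d-1}(E\cap \partial U) \leqslant \varepsilon$ yields $\sum_i P(\widetilde{E}_i)=P(E\cap U)\leqslant P(E)+\varepsilon$. I would then, as is harmless for the output bounds, first replace $F$ by the $\W_p$-minimizer $F^{*}$ of $E$, so that \prettyref{lem: prop_Wp} supplies an optimal transport $\Phi\colon E\to F^{*}$ with $|\Phi(x)-x|\leqslant C_0(d)|E|^{1/d}$ a.e.; set $\widetilde{F}_i:=\Phi(\widetilde{E}_i)$ and collect the residuals $E_0:=E\setminus U$, $F_0:=\Phi(E_0)$, each of volume $<\varepsilon$.

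Next I would choose translation vectors $v_1,\dots,v_N$ placing the pairs $(\widetilde{E}_i+v_i,\widetilde{F}_i+v_i)$ in pairwise disjoint cells aligned along a single coordinate axis. Since each $\widetilde{E}_i\cup\widetilde{F}_i$ fits inside a ball of radius at most $3+C_0(d)|E|^{1/d}$, each cell occupies a segment of length at most $6+2C_0(d)|E|^{1/d}$, so laying $N$ such cells end to end reproduces the leading two summands of $R_\varepsilon$ after substituting the bound on $N$. To keep the total volume equal to $m$ while remaining bounded, I would replace $E_0$ and $F_0$ by two disjoint equal-volume pieces $B_E,B_F$ cut by a hyperplane out of a single auxiliary ball of volume $2|E_0|\leqslant 2\varepsilon$ and radius $\leqslant (2\varepsilon/\omega_d)^{1/d}$, placed at the free end of the packing. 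Setting $\widetilde{E}:=\bigsqcup_i(\widetilde{E}_i+v_i)\sqcup B_E$ and $\widetilde{F}:=\bigsqcup_i(\widetilde{F}_i+v_i)\sqcup B_F$ then gives $(\widetilde{E},\widetilde{F})\in\mathcal{F}_m$ contained in $B(O,R_\varepsilon)$.

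For the verification, rigid translations preserve perimeter, so
\[ P\!\left(\bigsqcup_i(\widetilde{E}_i+v_i)\right)=P(E\cap U)\leqslant P(E)+\varepsilon, \]
and, provided $B_E$ is attached as a hyperplane half of the auxiliary ball glued onto the boundary of the adjacent cell rather than introduced as an isolated small component, its perimeter contribution absorbs into the remaining $\varepsilon$ of the claimed $2\varepsilon$ budget. For the Wasserstein estimate, concatenating on each cell the translated map $x\mapsto\Phi(x-v_i)+v_i$ with the reflection map between $B_E$ and $B_F$ gives an admissible plan of cost
\[ W_p(\widetilde{E},\widetilde{F})^p\leqslant\sum_i\int_{\widetilde{E}_i}|x-\Phi(x)|^p\,dx+W_p(B_E,B_F)^p\leqslant W_p(E,F)^p+W_p(B_E,B_F)^p, \]
and an explicit computation on two hyperplane halves of a ball of volume $2|E_0|$ yields $W_p(B_E,B_F)\leqslant(2/\omega_d)^{1/d}\varepsilon^{1/p+1/d}$; the subadditivity $(a^p+b^p)^{1/p}\leqslant a+b$ then finishes the bound.

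The main delicacy will be the joint shaping of the residual pair $(B_E,B_F)$: they must be kept disjoint, be close enough in space that their mutual $W_p$-cost is of order $\varepsilon^{1/p+1/d}$ rather than $\varepsilon^{1/p}$, and yet be shaped so that the extra perimeter fits inside the linear $\varepsilon$-budget rather than the sub-linear $\varepsilon^{(d-1)/d}$ a free-standing small ball would cost. Hyperplane bisection of an auxiliary ball of volume $2|E_0|$ hits the Wasserstein constant exactly; coaxing the perimeter contribution into the $2\varepsilon$ budget---by attaching the residual pieces to an existing cell boundary rather than isolating them---while still ensuring disjointness of $\widetilde{E}$ and $\widetilde{F}$ and containment in $B(O,R_\varepsilon)$ is where the argument requires the most care.
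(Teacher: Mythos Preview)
Your overall covering--packing strategy matches the paper's, but there are two genuine gaps in the perimeter accounting.

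First, the identity $\sum_i P(\widetilde{E}_i)=P(E\cap U)$ is false in general. Whenever two of the covering balls $B(x_i,r)$ and $B(x_j,r)$ overlap inside $E$, your disjointification introduces cuts along $E\cap\partial B(x_j,r)\cap\mathrm{int}(U)$, and each such internal interface is charged to the reduced boundary of the piece on either side, producing an uncontrolled extra $2\mathcal{H}^{d-1}$ term. Proposition~\ref{prop: perimeter_inc} only controls $\mathcal{H}^{d-1}(E\cap\partial U)$, the \emph{outer} boundary of the union, not these interior interfaces. The paper avoids this by partitioning $U$ into its \emph{connected components} $U_1,\dots,U_k$ and setting $E_j^\varepsilon=E\cap U_j$: since the $U_j$ are open and pairwise at positive distance, one genuinely has $\sum_j P(E_j^\varepsilon)=P(E\cap U)$ with no internal cuts. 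Each $U_j$, being a connected union of $n_j$ balls of radius $r\leqslant3$, lies in a ball of radius $6n_j$; this grouping (together with using the $\W_p$-minimizer of each $E_j^\varepsilon$, so that the displacement is $C_0(d)|E_j^\varepsilon|^{1/d}$ rather than $C_0(d)|E|^{1/d}$) is also what produces the exact constant in the second summand of $R_\varepsilon$, which your ball-by-ball packing does not recover.

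Second, your residual cannot meet the linear $\varepsilon$ budget. A hyperplane half of a ball of volume $2|E_0|\leqslant2\varepsilon$ has perimeter of order $\varepsilon^{(d-1)/d}$, and ``attaching'' it to a neighbouring cell saves at most the area of the flat face, which is of the same order $\varepsilon^{(d-1)/d}$; no such gluing brings the contribution down to $O(\varepsilon)$ when $\varepsilon$ is small. The paper's fix is simpler: keep $E_0^\varepsilon:=E\setminus U$ as a piece of the decomposition from the start, so that the splitting identity
\[
P(E\setminus U)+P(E\cap U)=P(E)+2\,\mathcal{H}^{d-1}(E\cap\partial U)\leqslant P(E)+2\varepsilon
\]
already \emph{includes} $P(E_0^\varepsilon)$ in the $2\varepsilon$ budget. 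One then replaces $E_0^\varepsilon$ by a ball $\widetilde{E}_0^\varepsilon=B(y_0,s_\varepsilon)$ of the same volume and takes $\widetilde{F}_0^\varepsilon$ to be the surrounding annulus $B(y_0,t_\varepsilon)\setminus B(y_0,s_\varepsilon)$; by the isoperimetric inequality $P(\widetilde{E}_0^\varepsilon)\leqslant P(E_0^\varepsilon)$, so the total perimeter can only decrease, and the Wasserstein cost of the ball--annulus pair is bounded by $t_\varepsilon\,|E_0^\varepsilon|^{1/p}\leqslant(2/\omega_d)^{1/d}\varepsilon^{1/p+1/d}$ exactly as required.
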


\begin{proof}
By Proposition \ref{prop: perimeter_inc},
 there exists a finite subset $I$ in $\R^d$ and a positive constant $r\in [2,3]$ such that the set
 \[U:=\bigcup_{x\in I} B(x,r)\] satisfies
\[
\abs{E\setminus U}<\varepsilon,\qquad \mathcal{H}^{d-1}(E\cap \partial U)\leqslant \varepsilon \quad\textrm{and}\quad \# I \leqslant \abs{E}\left(\frac{P(E)}{c(d)\varepsilon}\right)^d.\]

\begin{figure}[!hb]
    \centering
    \includegraphics[width=\columnwidth]{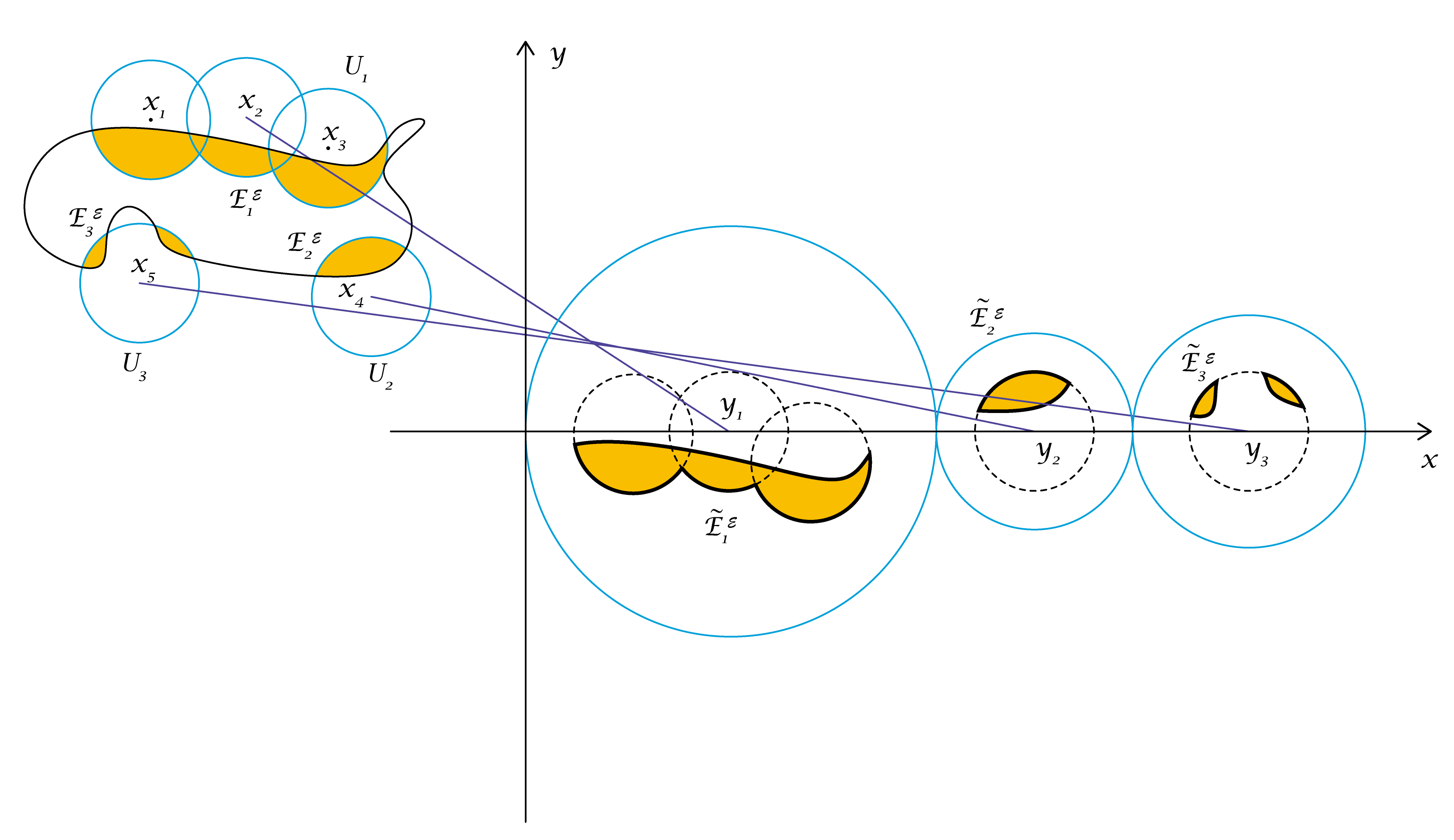}
    \caption{We use balls of fixed radius $r$ to cover the majority of $E$. For each connected part $E_j^{\varepsilon}$ combined with $F_j^{\varepsilon}$, we pack each pair $(E_j^{\varepsilon},F_j^{\varepsilon})$ into a ball and then align these balls together inside $B(O,R_{\varepsilon})$. For simplicity and clearness, we do not demonstrate corresponding parts from $F$.}
    \label{fig: WWDesign}
\end{figure}

Denote 
\[E^\varepsilon \defeq E\cap U\quad \textrm{and}\quad E_0^{\varepsilon}\defeq E \setminus U.\]
Then, $E=E^\varepsilon \cup E_0^{\varepsilon}$,
$$\abs{E_0^{\varepsilon}}= \abs{E\setminus \bigcup_{x\in I} B(x,r)}<\varepsilon,\quad\textrm{and}\quad
\abs{E^{\varepsilon}}=\abs{E}-\abs{E_0^{\varepsilon}}>\abs{E}-\varepsilon.$$

Since $\# I<\infty$, there are at most $\# I$ many connected components of $U$. Let $I=\bigsqcup_{j=1}^{k} I_j$, where $\left\{I_1,I_2,\cdots,I_k\right\}$ is a partition of $I$ and $k\leqslant \#I$, such that for each $j=1,2,\cdots, k$, 
$$U_j:=\bigcup_{x\in I_j} B(x,r)$$
is a connected component of $U$.
For each $j=1,2,\cdots,k$, denote 
\[E_j^\varepsilon \defeq E\cap U_j.\]
Then $E^{\varepsilon}=\bigcup_{j=1}^{k} E^{\varepsilon}_j$, and $E_j^{\varepsilon} \subseteq U_j \subseteq B(x_j, 2rn_j)$ for some point $x_j\in I_j$, where $n_j=\# I_j$ denotes the number of points in $I_j$.

Note that
\begin{equation}\label{eqn: perimeter_sum}
\sum_{j=0}^kP(E_j^\varepsilon)
=P(E\setminus U)+P(E\cap U)
=P(E)+2\H^{d-1}(E\cap \partial U\})
\leqslant P(E)+2\varepsilon.
\end{equation}

Since $\abs{F}=\abs{E}$ and $\abs{F\cap E}=0$, there exists an optimal transport map $\Phi$ that transports $E$ to $F$.
For $j=1,2,\cdots,k$,  let $F_j^{\varepsilon}:=\Phi(E_j^{\varepsilon})$ be the image of $E_j^{\varepsilon}$ under $\Phi$. Then
\begin{eqnarray} \label{eqn: upper_bound_on_W_sums}
\sum_{j=1}^k W_p^p( E_j^{\varepsilon}, F_j^{\varepsilon})=\sum_{j=1}^k \int_{E_j^{\varepsilon}} \abs{x-\Phi(x)}^p \diff{\Leb^d(x)} = \int_{E^{\varepsilon}} \abs{x-\Phi(x)}^p \diff{\Leb^d(x)} \leqslant W_p^p(E,F)
.
\end{eqnarray}

Now, let $\hat{F}_j^{\varepsilon}$ be a $\W_p$-minimizer of the bounded set $E^\varepsilon_j$. Then, 
$\abs{\hat{F}_j^{\varepsilon}}=\abs{E_j^{\varepsilon}}$, $\abs{\hat{F}_j^{\varepsilon}\cap E_j^{\varepsilon}}=0$ and 
\[W_p(E_j^{\varepsilon}, \hat{F}_j^{\varepsilon})\leqslant W_p( E_j^{\varepsilon}, F_j^{\varepsilon}).\]

Since $E^\varepsilon_j \subseteq B(x_j, 2rn_j)\subseteq B(x_j, 6n_j)$, by \eqref{eq: F_bnd} in \prettyref{lem: prop_Wp}, it follows that
\[\hat{F}_j^{\varepsilon}\subseteq B\left(x_j, 6n_j+C_0(d)\abs{E^\varepsilon_j}^{1/d}\right).\]

Note that
\begin{align*}
&\sum_{j=1}^k \textrm{diam}\left(B\left(x_j, 6n_j+C_0(d)\abs{E^\varepsilon_j}^{1/d}\right)\right)\\
=&\sum_{j=1}^k \left(12n_j+2C_0(d)\abs{E^\varepsilon_j}^{1/d}\right)=
12\cdot \#I+2C_0(d)\sum_{j=1}^k\abs{E^\varepsilon_j}^{1/d} \\
\leqslant& 12\cdot \#I+2C_0(d)k^{1-\frac{1}{d}}\left(\sum_{j=1}^k \abs{E^\varepsilon_j}\right)^{1/d}
\leqslant
12 \cdot \# I+2C_0(d) (\# I) ^{1-\frac{1}{d}} \abs{E}^{1/d}\\
\leqslant&
12\abs{E}\left(\frac{P(E)}{c(d)\varepsilon}\right)^d+2C_0(d) \left(\abs{E}\left(\frac{P(E)}{c(d)\varepsilon}\right)^d \right) ^{1-\frac{1}{d}} \abs{E}^{1/d}\\
=&
12 \abs{E}\left(\frac{P(E)}{c(d)\varepsilon}\right)^d+2C_0(d) \left(\frac{P(E)}{c(d)\varepsilon}\right)^{d-1} \abs{E}=2R_\varepsilon-2\rho_\varepsilon,
\end{align*}
where 
\[\rho_\varepsilon=\left(\frac{2\varepsilon}{\omega_d}\right)^{1/d}.\]
Thus, inside the ball $B(O, R_\varepsilon)$, one may pick $k+1$ pairwise disjoint closed balls \[\{\overline{B(y_j, 6n_j+C_0(d)\lvert E^\varepsilon_j \rvert^{1/d})}\}_{j=1}^k\cup \overline{B(y_0, \rho_\varepsilon)}.\]

For each $j=1,\cdots, k$, define
\[\widetilde{E}_j^{\varepsilon}=E_j^{\varepsilon}+(y_j-x_j)\quad \textrm{and}\quad \widetilde{F}_j^{\varepsilon}=\hat{F}_j^{\varepsilon}+(y_j-x_j).\]
i.e., we translate the pair $(E_j^{\varepsilon}, \hat{F}_j^{\varepsilon})$ in the ball $B(x_j,6n_j+C_0(d)\abs{E^\varepsilon_j}^{1/d})$ to the corresponding pair $(\widetilde{E}_j^{\varepsilon},\widetilde{F}_j^{\varepsilon})$ inside the ball $B(y_j,6n_j+C_0(d)|E^\varepsilon_j|^{1/d})$, as shown see Figure \ref{fig: WWDesign}.

Since both the perimeter and the Wasserstein distance are translation invariant, we have \[P(\widetilde{E}_j^{\varepsilon})=P(E_j^{\varepsilon})\quad \textrm{and}\quad
W_p(\widetilde{E}_j^{\varepsilon},\widetilde{F}_j^{\varepsilon})=W_p(E_j^{\varepsilon},\hat{F}_j^{\varepsilon}).\]
Also denote 
\[\widetilde{F}_0^{\varepsilon}\defeq B(y_0, t_\varepsilon)\setminus B(y_0, s_\varepsilon)\quad\textrm{and}\quad \widetilde{E}_0^{\varepsilon}\defeq B(y_0, s_\varepsilon),\]
with
\[t_\varepsilon=\left(\frac{2\abs{E_0^\varepsilon}}{\omega_d}\right)^{1/d}\quad\textrm{and}\quad s_\varepsilon=\left(\frac{\abs{E_0^\varepsilon}}{\omega_d}\right)^{1/d}.\]
Note that 
\[\abs{\widetilde{E}_0^{\varepsilon}}=\abs{E_0^\varepsilon}=\abs{\widetilde{F}_0^{\varepsilon}}.\]
Since $\abs{E_0^\varepsilon}\leqslant \varepsilon$, it follows that $0<s_\varepsilon<t_\varepsilon\leqslant \rho_\varepsilon$. Therefore, both sets $\widetilde{E}_0^{\varepsilon}$ and $\widetilde{F}_0^{\varepsilon}$ are contained in $B(y_0, \rho_\varepsilon)$.
Now, define
\[\widetilde{E}\defeq\bigcup_{j=0}^k\widetilde{E}_j^\varepsilon\quad\textrm{and}\quad\widetilde{F}\defeq\bigcup_{j=0}^k\widetilde{F}_j^\varepsilon.\]

Then 
\begin{align*}
    \abs{\widetilde{E}\cap \widetilde{F}}&=\abs{\bigcup_{j=0}^k\widetilde{E}_j^\varepsilon \cap \bigcup_{j=0}^k\widetilde{F}_j^\varepsilon}=\sum_{j=0}^k \abs{\widetilde{E}_j^\varepsilon\cap \widetilde{F}_j^\varepsilon}=0,\\
    \abs{\widetilde{E}}&=\sum_{j=0}^k \abs{\widetilde{E}_j^\varepsilon}= \sum_{j=0}^k \abs{E_j^\varepsilon}=\abs{E},
\end{align*}
and similarly $\abs{\widetilde{F}}=\abs{F}$. As a result, $(\widetilde{E}, \widetilde{F})\in \mathcal{F}$.

Moreover, by applying the isoperimetric inequality on $\widetilde{E}_0^{\varepsilon}$, \eqref{eqn: perimeter_sum} implies that
\begin{align*}
P(\widetilde{E})=\sum_{j=0}^kP(\widetilde{E}_j^\varepsilon)
\leqslant \sum_{j=0}^kP({E}_j^\varepsilon)\leqslant P(E)+2\varepsilon.
\end{align*}

Furthermore,
\begin{align*}
W_p^p(\widetilde{E},\widetilde{F})
&\leqslant\sum_{j=0}^k W_p^p(\widetilde{E}_j^\varepsilon, \widetilde{F}_j^\varepsilon)\\
&=W_p^p(\widetilde{E}_0^\varepsilon, \widetilde{F}_0^\varepsilon)+\sum_{j=1}^k W_p^p( E_j^\varepsilon, F_j^\varepsilon)\\
&\leqslant(t_\varepsilon)^p \abs{E_0^\varepsilon}+\sum_{j=1}^k W_p^p( E_j^\varepsilon, F_j^\varepsilon)\\
&\leqslant\left(\frac{2\varepsilon}{\omega_d}\right)^{p/d} \varepsilon+W_p^p(E,F),
\end{align*}
by \eqref{eqn: upper_bound_on_W_sums}.
Thus, since $p\geqslant 1$, it follows that
\[W_p(\widetilde{E},\widetilde{F})\leqslant \left( \left(\frac{2\varepsilon}{\omega_d}\right)^{p/d} \varepsilon+W_p^p(E,F) \right)^{1/p} \leqslant \left(\frac{2\varepsilon}{\omega_d}\right)^{1/d}\varepsilon^{1/p}+W_p(E,F).\]
\end{proof}

Now we use \prettyref{thm: rearrange} to prove \prettyref{thm: alter_bound_seq}.

\begin{proof}[Proof of \prettyref{thm: alter_bound_seq}]
Let $\left\{(E_n, F_n)\right\}$ be any minimizing sequence of the functional $P(E)+W_p(E,F)$ in $ \mathcal{F}_m$.
For each $n$, pick $\varepsilon_n$ small enough  so that
\[
 0<\varepsilon_n\leqslant \min\left\{\frac{1}{n}, m,~  \frac{P(E_n)}{2dc(d)}\right\}.
 \]
 By \prettyref{thm: rearrange},  there exist bounded sets $(\widetilde{E_n},\widetilde{F_n})\in \mathcal{F}_m$ contained in the ball $B(O,R_{\varepsilon_n})$, such that 
\[ P(\widetilde{E_n})\leqslant P(E_n)+\frac{2}{n}, \quad\textrm{and}\quad W_p(\widetilde{E_n},\widetilde{F_n})\leqslant W_p( E_n, F_n)+ \left(\frac{2}{\omega_d}\right)^{1/d} \left(\frac{1}{n}\right) ^{\frac{1}{p}+\frac{1}{d}}.
\]
Thus,
\[P(\widetilde{E_n})+W_p(\widetilde{E_n},\widetilde{F_n})\leqslant P(E_n)+W_p( E_n, F_n)+\frac{2}{n}+ \left(\frac{2}{\omega_d}\right)^{1/d} \left(\frac{1}{n}\right) ^{\frac{1}{p}+\frac{1}{d}}.\]
This shows the sequence of the bounded sets $\left\{(\widetilde{E_n},\widetilde{F_n})\right\}$ is also a minimizing sequence of the functional $P(E)+W_p(E,F)$ in $ \mathcal{F}_m$.
\end{proof}

\begin{corollary}
For any $m\geqslant 0$, the minimizing problem (\ref{problem: m}) is equivalent to the problem
\begin{equation}\label{problem: bounded} 
 \text{ Minimize }\qquad   P(E)+ W_p(E, F)
\qquad\text{ among all bounded sets } (E, F)\in \mathcal{F}_m.
\end{equation}
\end{corollary}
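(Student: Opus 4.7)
The corollary follows essentially immediately from \prettyref{thm: alter_bound_seq}, so the plan is short. Let $\alpha$ denote the infimum of $P(E)+W_p(E,F)$ over $\mathcal{F}_m$ and $\beta$ denote the infimum over the bounded subclass of $\mathcal{F}_m$. I want to show $\alpha=\beta$.

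The inequality $\alpha\leqslant\beta$ is immediate since the bounded subclass is contained in $\mathcal{F}_m$, and the reverse will come from \prettyref{thm: alter_bound_seq}. Concretely, if $\alpha=+\infty$ there is nothing to prove (both infima are $+\infty$). Otherwise, fix any minimizing sequence $\{(E_n,F_n)\}\subseteq\mathcal{F}_m$ so that $P(E_n)+W_p(E_n,F_n)\to\alpha$. In particular the perimeters $P(E_n)$ are eventually finite, so the hypothesis $\varepsilon_n\leqslant P(E_n)/(2dc(d))$ in \prettyref{thm: rearrange} can be arranged. Applying \prettyref{thm: alter_bound_seq} produces an alternative minimizing sequence $\{(\widetilde{E_n},\widetilde{F_n})\}\subseteq\mathcal{F}_m$ of bounded pairs with
\[
P(\widetilde{E_n})+W_p(\widetilde{E_n},\widetilde{F_n})\;\longrightarrow\;\alpha.
\]
Since each $(\widetilde{E_n},\widetilde{F_n})$ lies in the bounded subclass, this forces $\beta\leqslant\alpha$, proving equality.

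There is no real obstacle here: all the work has already been absorbed into \prettyref{thm: alter_bound_seq} (and in turn into the ``covering--packing'' machinery of \prettyref{prop: perimeter_inc} and \prettyref{thm: rearrange}). The only mild point worth noting is that the bounded pairs produced by \prettyref{thm: rearrange} are \emph{not} uniformly bounded, because the radius $R_{\varepsilon_n}$ depends on $\varepsilon_n$ and on $P(E_n)$ and generically blows up as $\varepsilon_n\to 0$; but uniform boundedness is not needed for the equivalence of the two infima, only for the eventual compactness argument carried out later in \prettyref{sec: small_vol}. Accordingly, the proof of the corollary will just consist of the two--line argument above.
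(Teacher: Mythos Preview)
Your argument is correct and matches the paper's approach: the corollary is stated without proof in the paper precisely because it is an immediate consequence of \prettyref{thm: alter_bound_seq}, and you have spelled out that two-line argument accurately. Your remark that the bounded pairs from \prettyref{thm: rearrange} are not uniformly bounded is also consistent with the paper's explicit remark to that effect.
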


To solve problem (\ref{problem: bounded}), it is sufficient to solve the problem
\begin{equation}\label{problem: T} 
 \text{ Minimize }\qquad   T(E):=P(E)+ \W_p(E)
 \end{equation}
 among all bounded set $E\subseteq \R^d$ of finite perimeter with $\abs{E}=m$. Any solution $E^*$ of problem (\ref{problem: T}) together with its $\W_p$-minimizer $F^*$ provides a solution $(E^*, F^*)$ to problem (\ref{problem: bounded}), and vice versa.

\section{Existence of minimizers for small volume}\label{sec: small_vol}
In this section, we will show that problem (\ref{problem: T}) has a solution when $m$ is small and $\frac{1}{p}+\frac{2}{d}>1$. Our work is inspired by \cite{Knupfer2014Isoperimetric} as mentioned in the introduction.

\begin{theorem}\label{thm: minTset}
Suppose $d\geqslant 1, p\geqslant 1 $ with $\frac{1}{p}+\frac{2}{d}>1$, there exists an $m_0>0$ such that for any $m\leqslant m_0$, the minimization problem (\ref{problem: T}) has a solution.
\end{theorem}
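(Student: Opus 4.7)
By \prettyref{thm: alter_bound_seq} the minimization problem (\ref{problem: T}) admits a minimizing sequence $\{E_n\}$ of bounded sets of finite perimeter with $|E_n|=m$. My plan has two stages: first, upgrade this to a \emph{uniformly} bounded minimizing sequence (all sitting inside a single ball $B_R$); second, apply compactness and lower semicontinuity to extract a minimizer. The second stage is routine: \prettyref{prop: compact_bdset} produces a limit $E\subseteq B_R$ of finite perimeter with $E_{n_k}\to E$ in measure (so $|E|=m$), and then \prettyref{prop: lsc of perimeter} together with \prettyref{lem: lsc_Wp} gives $T(E)\leqslant\liminf_k T(E_{n_k})=\inf T$, so $E$ is a minimizer. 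All the work is in producing uniform boundedness, which is where the hypothesis $\frac{1}{p}+\frac{2}{d}>1$ and the smallness of $m$ must enter.

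For that stage I will prove a non-optimality criterion in the spirit of Kn\"upfer--Muratov: there exist $R_0=R_0(d,p)$ and $m_0=m_0(d,p)>0$ such that for every $m\leqslant m_0$, any bounded $E$ with $|E|=m$ that is not contained, after translation, in $B_{R_0 m^{1/d}}$ admits a competitor $\widetilde E$ with $|\widetilde E|=m$ and $T(\widetilde E)<T(E)$. Applying this to each $E_n$, and using translation invariance of $P$ and $\W_p$, delivers the desired uniformly bounded minimizing sequence. The competitor is constructed by a truncation-and-reattachment: translate $E$ so that a mass-dense ``core'' point furnished by the Nucleation Lemma \prettyref{lem: nucleation} sits at the origin, invoke the coarea formula to pick a radius $r\in[R_0 m^{1/d}/2,\,R_0 m^{1/d}]$ at which the cross-section $S:=\mathcal{H}^{d-1}(\partial^* E\cap\partial B_r)$ is small, discard the tail $E\setminus B_r$ (of mass $\mu\in(0,m]$), and paste in its place a small ball $B_\mu$ of volume $\mu$ attached to $E\cap B_r$. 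Using the identity $P(E\cap B_r)+P(E\setminus B_r)=P(E)+2S$ and the isoperimetric inequality $P(B_\mu)\leqslant P(E\setminus B_r)$, the perimeter balance reads $P(\widetilde E)\leqslant P(E)+2S-\bigl(P(E\setminus B_r)-P(B_\mu)\bigr)$; for the Wasserstein part, \prettyref{eq: Wasserstein_uni_bd} yields $\W_p(\widetilde E)\leqslant C_0(d)\,m^{1/p+1/d}$. The assumption $\frac{1}{p}+\frac{2}{d}>1$ makes $m^{1/p+1/d}$ of strictly higher exponent than $m^{(d-1)/d}=P(B_m)/c_d$ and thus negligible compared to the perimeter scale as $m\to 0^+$, so choosing $R_0$ large and $m_0$ small lets the perimeter gain swamp the Wasserstein loss and gives $T(\widetilde E)<T(E)$.

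The hardest part will be making the Wasserstein comparison rigorous: because $\W_p$ is defined via a minimization over the partner set $F$, it enjoys no immediate subadditivity or monotonicity under truncation. I expect to handle this either by iterating the truncation on shrinking tail-mass --- each step needing only an upper bound on $\W_p(\widetilde E)$ paired with a strict perimeter decrease coming from the tail's isoperimetric deficit --- or through an auxiliary lemma bounding $\W_p(E)$ from below by a multiple of $r\cdot\mu^{1/p}$, exploiting that mass sitting in the tail must travel a distance of order at least $r$ to find room for any admissible $\W_p$-partner disjoint from $E$. A minor secondary technicality is choosing the centering translation consistently along the sequence: this is handled by always translating to a mass-dense point supplied by \prettyref{lem: nucleation}, whose existence is quantitative in the uniform perimeter bound $\sup_n P(E_n)<\infty$ that is inherited from the minimizing property of the sequence.
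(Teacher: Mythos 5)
Your two‑stage plan (upgrade to a uniformly bounded minimizing sequence, then apply compactness of \prettyref{prop: compact_bdset} plus lower semicontinuity of perimeter and of $\W_p$) is exactly the paper's framework, and the hypothesis $\tfrac1p+\tfrac2d>1$ enters for the same reason, namely to compare the exponents $\tfrac1p+\tfrac1d$ and $1-\tfrac1d$. But the execution of the non‑optimality step diverges from the paper and, as sketched, has concrete gaps.

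First, the auxiliary lemma you propose --- that $\W_p(E)\gtrsim r\,\mu^{1/p}$ because tail mass ``must travel a distance of order $r$ to find room for a partner disjoint from $E$'' --- is false. The partner set $F$ in the definition of $\W_p(E)$ is unconstrained apart from disjointness and volume, so it may place mass of measure $\mu$ in the immediate vicinity of the tail $E\setminus B_r$; the tail then costs only $O(\mu^{1/p+1/d})$, independent of $r$. This is precisely what makes the Wasserstein penalty harder to exploit than the Coulombic one in Kn\"upfer--Muratov: disconnected far‑away pieces are not intrinsically penalised by $\W_p$. Second, the ``truncate and paste a ball'' competitor does not give a strict decrease in $T$ when the tail $E\setminus B_r$ is already close to a ball: then $P(E\setminus B_r)-P(B_\mu)$ is negligible, you pay $+2S$ in perimeter and, because you only have the upper bound $\W_p(\widetilde E)\leqslant C_0(d)m^{1/p+1/d}$ with no matching lower bound of the same order on $\W_p(E)$, the balance sheet does not close. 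Iterating does not repair this: the same comparison must be made at each step. Third, the Nucleation Lemma \prettyref{lem: nucleation} does not localize most of the mass near one centre --- it produces a whole family of mass‑dense points, possibly far apart --- so translating to one such point does not guarantee that $|E\setminus B_{R_0m^{1/d}}|$ is small.

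The paper's Proposition \prettyref{prop: nonoptimality1} and Theorem \prettyref{thm: comp_bdset} resolve all three issues, and it is worth comparing. (i) Instead of pasting a ball, it \emph{dilates} the core, taking $E=\ell G_1$ with $\ell=(1+\gamma)^{1/d}$ and $\gamma=|G_2|/|G_1|$; since $d-1<1+\tfrac dp$ one gets $T(E)\leqslant\ell^{1+d/p}T(G_1)$, so the dilation cost is $O(\gamma\,T(G_1))$, which for small $\gamma$ is $\ll T(G_2)\geqslant c(d)|G_2|^{1-1/d}$ by the scaling $\gamma T(G_1)\leqslant C(d)\gamma^{1/d}|G_2|^{1-1/d}$; this gives a strict decrease even when the tail has zero isoperimetric deficit. (ii) The only Wasserstein fact used is the monotonicity $\W_p(G_1)\leqslant\W_p(G)$ for $G_1\subseteq G$ (coming from restricting an optimal map), not any lower bound involving the distance to the tail. (iii) The mass is localized not via nucleation but via the quantitative isoperimetric inequality of \prettyref{thm: figalli}: since $T(G)\leqslant T(B_r)$ and $\W_p(B_r)\leqslant C_0 m^{1/p+1/d}\ll P(B_r)\sim m^{1-1/d}$, the isoperimetric deficit $D(G)$ is $O(r^{2+d(1/p-1)})$ and so $G$ is close to a single ball in measure after translation; and finally a coarea/Gronwall dichotomy handles the regime where every cut radius $t\in[r,1]$ has a large cross‑section, in which case the tail mass is forced to vanish outright. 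You should replace the ball‑pasting and the false lower bound for $\W_p$ with the dilation argument and the quantitative isoperimetric localization.
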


Recall \prettyref{thm: minTset} is equivalent to \prettyref{thm: minEset} and \prettyref{thm: minGset}.

To do so, we start with a few technique propositions.

\begin{proposition}[Nonoptimality]\label{prop: nonoptimality1}
Suppose $d\geqslant 1, p\geqslant 1 $ with $\frac{1}{p}+\frac{2}{d}>1$,
let $G\subseteq \R^d$ be a bounded set of finite perimeter with $\abs{G}=m< \min\{1,\omega_d\}$. Suppose there is a partition of $G$ into two disjoint sets of finite perimeter $G_1$ and $G_2$ with positive volumes such that
    \begin{align}\label{eq: cond_nonopt}
        P(G_1)+P(G_2)-P(G)\leqslant \frac{1}{2}T(G_2).
    \end{align}
    Then there is an $\varepsilon=\varepsilon(m,d)>0$ such that if
    \[\abs{G_2}\leqslant \varepsilon \abs{G_1},\]
    there exists a bounded set $E\subseteq \R^d$ such that $\abs{E}=\abs{G}$ and $T(E)<T(G)$. 
\end{proposition}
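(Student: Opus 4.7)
The plan is to construct a single bounded competitor $E$ by discarding the lighter piece $G_2$ and dilating $G_1$ isotropically up to volume $m$. Set
\[
t\defeq\frac{\abs{G_2}}{\abs{G_1}},\qquad \lambda\defeq(1+t)^{1/d},
\]
and let $E\defeq\lambda G_1$ (translated so that it is bounded); then $\abs{E}=m$ and, by the scaling identities \eqref{eq: scaling} and \eqref{eq: W_rescale},
\[
T(E)=\lambda^{d-1}P(G_1)+\lambda^{1+d/p}\W_p(G_1).
\]

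I would first perform an \emph{a priori} reduction that bounds $P(G)$ in terms of $m,d,p$ alone. Since the open ball $B_m$ of volume $m$ is an admissible bounded competitor, one may assume $T(G)\le T(B_m)$, and the classical isoperimetric inequality together with \prettyref{lem: prop_Wp}(b) then forces $P(G)\le M(m,d,p)$ for an explicit constant. Next I would establish a subset monotonicity for $\W_p$: restricting an optimal transport map from $G$ to its $\W_p$-minimizer to $G_i\subseteq G$ produces an admissible disjoint target for $G_i$ (it lies inside the $\W_p$-minimizer of $G$, hence outside $G$), which gives $\W_p(G_i)\le\W_p(G)$ for $i=1,2$. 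Writing $\delta\defeq P(G_1)+P(G_2)-P(G)\ge 0$ and invoking the hypothesis $\delta\le\tfrac12 T(G_2)$ then produces
\[
T(E)-T(G)\le(\lambda^{d-1}-1)P(G_1)+(\lambda^{1+d/p}-1)\W_p(G_1)-\tfrac12 P(G_2)+\tfrac12\W_p(G_2).
\]

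For the scaling errors, once $\varepsilon\le 1$ so that $t\le 1$, concavity and the mean value theorem give $\lambda^{d-1}-1\le\tfrac{d-1}{d}t$ and $\lambda^{1+d/p}-1\le C(d,p)t$, while the reduction above combined with \prettyref{lem: prop_Wp}(b) bound both $P(G_1)$ and $\W_p(G_1)$ by constants depending only on $m,d,p$; since $\abs{G_1}\ge m/2$ is automatic, the positive contribution becomes at most $C_\star(m,d,p)\abs{G_2}$. The negative part is controlled by the classical isoperimetric inequality $P(G_2)\ge d\,\omega_d^{1/d}\abs{G_2}^{(d-1)/d}$ and by $\W_p(G_2)\le C_0(d)\abs{G_2}^{1/p+1/d}$ from \prettyref{lem: prop_Wp}(b). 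Factoring $\abs{G_2}^{(d-1)/d}$ out of the resulting inequality yields
\[
T(E)-T(G)\le \abs{G_2}^{(d-1)/d}\!\left[C_\star(m,d,p)\abs{G_2}^{1/d}-\tfrac{d\,\omega_d^{1/d}}{2}+\tfrac{C_0(d)}{2}\abs{G_2}^{1/p+2/d-1}\right].
\]
The hypothesis $\frac{1}{p}+\frac{2}{d}>1$ is exactly what makes both positive exponents in the bracket strictly positive, so the bracket is negative whenever $\abs{G_2}\le\varepsilon_0(m,d,p)$; taking $\varepsilon\defeq\min(1,\varepsilon_0/m)$ and using $\abs{G_2}\le\varepsilon\abs{G_1}\Rightarrow\abs{G_2}\le\varepsilon m\le\varepsilon_0$ finishes the proof. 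The main obstacle is that the scaling error $(\lambda^{d-1}-1)P(G_1)$ apparently depends on the unbounded quantity $P(G)$; the ball-competitor reduction at the beginning is exactly what reduces this to a dependence on $m,d,p$ alone, while the exponent condition $\frac{1}{p}+\frac{2}{d}>1$ is precisely what ensures the perimeter savings from discarding $G_2$ dominate the Wasserstein penalty in the small-volume regime.
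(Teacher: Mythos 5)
Your proof is correct and follows the same overall strategy as the paper: delete $G_2$, dilate $G_1$ to volume $m$, and balance the scaling penalty against the isoperimetric saving from $G_2$. The one genuine difference is the treatment of the Wasserstein terms, and here your route is actually the more careful one. The paper invokes the subadditivity $\W_p(G_1)+\W_p(G_2)-\W_p(G)\leqslant 0$ for the disjoint decomposition, but for $p>1$ only the $p$-power version $\W_p^p(G_1)+\W_p^p(G_2)\leqslant\W_p^p(G)$ follows from restricting the optimal map, and the linear version can fail (two far-apart balls give $\W_p(G_1)+\W_p(G_2)=2^{1-1/p}\W_p(G)>\W_p(G)$). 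You instead use only the harmless monotonicity $\W_p(G_1)\leqslant\W_p(G)$, keep $\tfrac12\W_p(G_2)$ on the positive side, and absorb it via the explicit bound $\W_p(G_2)\leqslant C_0(d)\abs{G_2}^{1/p+1/d}$ from \prettyref{lem: prop_Wp}(b); after factoring $\abs{G_2}^{(d-1)/d}$ this appears with exponent $\tfrac1p+\tfrac2d-1>0$, which is exactly the standing hypothesis, so the same isoperimetric term dominates. Your a priori step (reducing to $T(G)\leqslant T(B_m)$ to get $P(G)$, hence $P(G_1)$ via \eqref{eq: cond_nonopt}, bounded by $m,d,p$) supplies the constant $C_\star(m,d,p)$ that the paper instead derives from the chain $T(G_1)\leqslant T(G)\leqslant C(d)m^{1-1/d}$; both work, but yours does not lean on the questionable subadditivity to get $T(G_1)\leqslant T(G)$. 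In short: same competitor, same conclusion, but your handling of $\W_p(G_2)$ closes a gap in the paper's justification for $p>1$.
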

\begin{figure}[h]
\centering
\begin{tikzpicture}

\draw[fill=gray!50!white] (8.5,0) circle (1.2);

\draw[fill=gray!50!white] plot [smooth,tension=0.5] coordinates {(8.5,1.2) (8.3,2) (8.21,1.16)};

\draw[red,line width=0.5mm] (8.5,1.2) arc (90:105:1.2);

\node at (8.2,1.5) {$G_2$};
\node at (8.5,0) {$G_1$};

\end{tikzpicture}
\caption[]{If a set $G$ can be split into a dominated part $G_1$ and a remainder part $G_2$, with a small slicing surface area bounded by $\frac{1}{4}T(G_2)$, then $G$ may not be a $T$-minimizer.}
\label{fig:separate}
\end{figure}
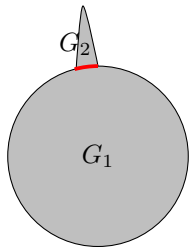
\begin{proof}
Let $r:=\left(\frac{m}{\omega_d}\right)^{1/d}<1$. Then $\abs{B_r}=\omega_d r^d=m=\abs{G}$. 
When $T(B_r)<T(G)$, the result holds for $E=B_r$. Thus, without loss of generality, we may assume that $T(B_r)\geqslant T(G)$.

By direct computation and \eqref{eq: Wasserstein_uni_bd} in \prettyref{lem: prop_Wp}, 
\[P(B_r)=d\omega_d r^{d-1}=d\omega_d^{1/d} m^{1-1/d}\quad\textrm{and}\quad \mathcal{W}_p(B_r)\leqslant C_0(d) m^{\frac{1}{p}+\frac{1}{d}},\]
thus,
\begin{equation}\label{eq: T2_upperbound}
    T(G)\leqslant T(B_r)\leqslant C(d)\max{(m^{1/p+1/d},m^{1-1/d})}\leqslant C(d)m^{1-1/d},
\end{equation}
because $\frac{1}{p}+\frac{1}{d}>1-\frac{1}{d}$ and $m<1$.
Since $G$ is the disjoint union of $G_1$ and $G_2$, it follows that
\begin{equation}\label{eq: W_G_union}
    \W_p(G_1)+\W_p(G_2)- \W_p(G)\leqslant 0.
\end{equation}
Together with (\ref{eq: cond_nonopt}), it follows that
\[
T(G_1)+T(G_2)-T(G) \leqslant\frac{1}{2}T(G_2),
\]
i.e.
\begin{equation}\label{eq: T_inequality}
    T(G_1)+\frac{1}{2}T(G_2)\leqslant T(G).
\end{equation}

Let 
\[
\gamma=\frac{\abs{G_2}}{\abs{G_1}}, \ \ell=(1+\gamma)^{1/d},\quad\textrm{and}\quad E=\ell G_1.
\]
Note that when $\frac{1}{p}+\frac{2}{d}>1$, it follows that $d-1<1+\frac{d}{p}$. Note that $\ell>1$, thus
\[
        T(E)=\ell^{d-1}P(G_1)+\ell^{1+d/p}\W_p(G_1)
        \leqslant \ell^{1+d/p}(P(G_1)+\W_p(G_1))=\ell^{1+d/p}T(G_1).
\]
As a result,
    \begin{align*}
        T(E)-T(G)&\leqslant T(G_1)-T(G)+(\ell^{1+d/p}-1) T(G_1)\\
        &=T(G_1)+T(G_2)-T(G) -T(G_2)+(\ell^{1+d/p}-1)T(G_1)\\
        &=\left(P(G_1)+P(G_2)-P(G)\right) + \left(\mathcal{W}_p(G_1)+\mathcal{W}_p(G_2)-\mathcal{W}_p(G)\right) -T(G_2)+(\ell^{1+d/p}-1) T(G_1)\\ 
        &\stackrel{\eqref{eq: cond_nonopt}}{\leqslant}\frac{1}{2}T(G_2)+0-T(G_2)+(\ell^{1+d/p}-1)T(G_1),\\
        &=-\frac{1}{2}T(G_2)+(\ell^{1+d/p}-1)T(G_1)\\
        &= -\frac{1}{2}T(G_2)+((1+\gamma)^{1/d+1/p}-1)T(G_1)\\
         &\leqslant -\frac{1}{2}T(G_2)+2(1/d+1/p)\gamma T(G_1)
    \end{align*}
    when $\gamma>0$ is small enough.
    By isoperimetric inequality, we have 
    \[T(G_2)\geqslant P(G_2)\geqslant C(d)\abs{G_2}^{1-1/d}.\]
    On the other hand, by \eqref{eq: T_inequality} and \eqref{eq: T2_upperbound}, when $\gamma<1/2$,
    \[
   \gamma T(G_1)\leqslant \gamma  T(G)\leqslant C(d)\gamma m^{1-1/d}=C(d)\gamma^{1/d}(\gamma m)^{1-1/d}\leqslant C(d)\gamma^{1/d}(2\abs{G_2})^{1-1/d}.
    \]
   Hence,  combine those inequalities, we have
    \begin{align*}
        T(E)-T(G)&\leqslant -C(d)\abs{G_2}^{1-1/d}+C(d)\gamma^{1/d} \abs{G_2}^{1-1/d}\\
        &\leqslant -C(d)\abs{G_2}^{1-1/d}+C(d)\varepsilon^{1/d} \abs{G_2}^{1-1/d}\\
        &<0,
     \end{align*}
    for $\varepsilon$ sufficiently small.
\end{proof}

Using the above proposition, we have the following uniform boundedness result:
\begin{theorem}\label{thm: comp_bdset}
Suppose $p\geqslant$, $d\geqslant1$ with $\frac{1}{p}+\frac{2}{d}>1$, there exists an $m_0>0$ such that for every bounded set $G\subseteq \R^d$ of finite perimeter with $\abs{G}\leqslant m_0$, there exists a bounded set $E\subseteq \R^d$ of finite perimeter with 
\begin{align}
\label{eq: comp_bdset}
   \abs{E}=\abs{G}, \qquad  T(E)\leqslant T(G)\quad\textrm{and}\quad E\subseteq B_2.
\end{align}
\end{theorem}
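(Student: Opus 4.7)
The strategy is to apply the non-optimality criterion of \prettyref{prop: nonoptimality1} iteratively to spherical cuts of $G$ chosen via the coarea formula, ``peeling off'' the parts of $G$ far from a chosen center until what remains fits into $B_2$. If after a suitable translation $G$ already lies in $B_2$ there is nothing to prove; otherwise, I would first translate $G$ so that the origin is a density peak, e.g.\ by (near-)maximizing $|G\cap B(x_0,1)|$ over $x_0\in\R^d$, which concentrates most of the mass of $G$ near the origin.

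For $R>0$, write $g(R)=|G\setminus B_R|$, $G_1(R)=G\cap B_R$ and $G_2(R)=G\setminus B_R$. To apply \prettyref{prop: nonoptimality1} to the partition $G=G_1(R)\sqcup G_2(R)$, I need the two conditions $g(R)\le\varepsilon|G_1(R)|$ and $2\mathcal{H}^{d-1}(G\cap\partial B_R)\le\tfrac{1}{2}T(G_2(R))$. The first is satisfied for all $R$ beyond some $R^{\sharp}<\infty$ because $g(R)\to 0$. For the second, I would combine the isoperimetric lower bound $T(G_2)\ge P(G_2)\ge d\omega_d^{1/d}|G_2|^{(d-1)/d}$ with the coarea identity $-g'(R)=\mathcal{H}^{d-1}(G\cap\partial B_R)$, reducing the problem to finding $R$ satisfying $-g'(R)\le C(d)\,g(R)^{(d-1)/d}$. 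Were this to fail throughout an interval of length $\sigma$, integration would give
\[g(R^{\sharp})^{1/d}-g(R^{\sharp}+\sigma)^{1/d}\ge \tfrac{C(d)\,\sigma}{4d},\]
contradicting $g\ge 0$ once $\sigma$ exceeds $4d\,g(R^{\sharp})^{1/d}/C(d)$. Hence an admissible radius $R$ exists, and \prettyref{prop: nonoptimality1} produces $E^{(1)}=\ell\,G_1(R)$ with $|E^{(1)}|=|G|$, $T(E^{(1)})<T(G)$, and $E^{(1)}\subseteq B_{\ell R}$, where $\ell=(1+g(R)/|G_1(R)|)^{1/d}$ can be taken arbitrarily close to $1$.

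Iterating this cut-and-rescale construction yields a sequence $\{G^{(n)}\}$ with strictly decreasing $T$-values, each term bounded by a ball of radius controlled by the cumulative product $\prod_{k\le n}\ell_k\cdot R^{(n)}$. Using the compactness in \prettyref{prop: compact_bdset} together with the lower semicontinuity of $P$ from \prettyref{prop: lsc of perimeter} and of $\W_p$ from \prettyref{lem: lsc_Wp}, I would extract an $L^1$-limit $E$ with $|E|=|G|$ and $T(E)\le T(G)$. The main obstacle is the quantitative bookkeeping: each step enlarges the bounding ball by the factor $\ell_n>1$, so one must arrange simultaneously that outer mass is removed at a definite rate and that the $\ell_n$ remain sufficiently close to $1$ for the product $\prod_n\ell_n$ to be bounded and for the limiting radius to drop below $2$. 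The smallness assumption $m\le m_0$ is precisely what makes this possible: it is the threshold below which the isoperimetric term dominates the Wasserstein contribution in \prettyref{prop: nonoptimality1}, forcing each rescaling factor to be close to $1$ and permitting net contraction all the way into $B_2$; this is also where the scaling hypothesis $\tfrac{1}{p}+\tfrac{2}{d}>1$ enters through $\ell^{1+d/p}$ dominating $\ell^{d-1}$ in the nonoptimality estimate.
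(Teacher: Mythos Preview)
Your coarea/Gronwall step for locating a good cutting radius is correct and is indeed the same differential inequality the paper uses. The genuine gap is the contraction mechanism: your iteration produces sets $E^{(n)}\subseteq B_{\ell_n R^{(n)}}$, but nothing in your argument forces $R^{(n)}$ to decrease, let alone drop below $2$. The threshold radius $R^{\sharp}$ is defined only by the tail condition $g(R)\le\varepsilon|G_1(R)|$, which for an arbitrary bounded $G$ can be as large as you like; after one cut-and-rescale the new set $E^{(1)}$ sits in $B_{\ell_1 R_1}$ with $\ell_1 R_1$ possibly \emph{larger} than $R^{\sharp}$, and there is no structural reason the next $R^{\sharp}$ should be smaller. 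Centering at a density peak of $|G\cap B(x_0,1)|$ does not help quantitatively when $|G|=m$ is tiny. The phrase ``permitting net contraction all the way into $B_2$'' is exactly the claim that needs proof, and the smallness of $m$ alone (which only makes each $\ell_n$ close to $1$) does not supply it.

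The idea you are missing is the quantitative isoperimetric inequality (\prettyref{thm: figalli}). The paper does not iterate at all: assuming without loss of generality $T(G)<T(B_r)$ with $r=(m/\omega_d)^{1/d}$, one gets an explicit bound on the isoperimetric deficit $D(G)\le C(d,p)\,r^{\alpha}$ with $\alpha=2+d(\tfrac{1}{p}-1)>0$, and hence, after a single translation, $|G\setminus B_r|\le C(d,p)\,r^{\alpha/2}|G|$. For $m$ small this already gives $|G\setminus B_r|\le\varepsilon|G\cap B_r|$ at the \emph{fixed small} radius $r$. The dichotomy is then run once on the interval $[r,1]$: either the slicing condition of \prettyref{prop: nonoptimality1} holds at some $t\in[r,1]$, in which case $E=\ell(G\cap B_t)\subseteq B_{2}$ directly; or it fails throughout $[r,1]$, and your own Gronwall computation, now started from $|G\setminus B_r|^{1/d}\le C\,r^{1+\alpha/(2d)}$, forces $|G\setminus B_1|=0$, so $E=G$ works. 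Note also that the hypothesis $\tfrac{1}{p}+\tfrac{2}{d}>1$ is used a second time here (beyond the $\ell^{1+d/p}$ versus $\ell^{d-1}$ comparison you mention) to make $\alpha>0$, which is what drives the deficit to zero as $m\to 0$.
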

\begin{proof}
We may assume $m:=\abs{G}\leqslant \min\{1,\omega_d\}$, and set $r:=\left(\frac{m}{\omega_d}\right)^{1/d}$. Note that $r\leqslant 1$ and $\abs{B_r}=\omega_d r^d=\abs{G}$. Thus, when $T(G)\geqslant T(B_r)$, the set $E=B_r$ satisfies \eqref{eq: comp_bdset}. As a result, without loss of generality, we may assume that 
\[T(G)< T(B_r).\]
That is,
\[P(G)+\W_p(G)<P(B_r)+\W_p(B_r).\]
Thus, we have the following upper bound for the isoperimetric deficit of $G$: 
\begin{align*}
    D(G)&=\frac{P(G)-P(B_r)}{P(B_r)}<\frac{\W_p(B_r)-\W_p(G)}{P(B_r)}\\
    &\leqslant\frac{\W_p(B_r)}{P(B_r)}\leqslant\frac{C_0(d)\abs{B_r}^{\frac{1}{p}+\frac{1}{d}}}{P(B_r)}\\
    &=\frac{C_0(d)(\omega_d r^d)^{\frac{1}{p}+\frac{1}{d}}}{d\omega_d r^{d-1}}=\frac{C_0(d)}{d(\omega_d )^{1-\frac{1}{p}-\frac{1}{d}}}r^{\alpha},
\end{align*}
where $\alpha:=2+d(\frac{1}{p}-1)>0$.
By \eqref{eq: figalli_isop}, and up to a suitable translation, we have 
\begin{align}
\label{eq: volume_rate}
\frac{\abs{G\setminus B_r}}{\abs{G}}&\leqslant
\frac{\abs{G\Delta B_r}}{\abs{G}}= \Delta(G,B_r)\leqslant C(d)\sqrt{D(G)}\nonumber\\
&\leqslant C(d)\sqrt{\frac{C_1(d,p)}{d(\omega_d )^{1-\frac{1}{p}-\frac{1}{d}}}r^{\alpha}}=C(d,p)r^{\alpha/2},
\end{align}
where
\[C(d,p)=C(d)\sqrt{\frac{C_1(d,p)}{d(\omega_d )^{1-\frac{1}{p}-\frac{1}{d}}}}. \]

Let $m_0>0$ be small enough such that
\[C(d,p)(\frac{m_0}{\omega_d})^{\alpha/2}<1.\]
Then,
\[C(d,p)r^{\alpha/2}=C(d,p)(\frac{m}{\omega_d})^{\alpha/2}<1.\]

Since the function $\frac{x}{1-x}$ is increasing on $[0,1)$, we have when $r>0$ is small enough,
\[\frac{\abs{G\setminus B_r}/\abs{G}}{1-\abs{G\setminus B_r}/\abs{G}}\leqslant \frac{C(d,p)r^{\alpha/2}}{1-C(d,p)r^{\alpha/2}} \leqslant \varepsilon,\]
where $\varepsilon$ is given in Proposition \ref{prop: nonoptimality1}.
That is, 
\[\abs{G\setminus B_r}\leqslant \varepsilon\abs{G\cap B_r},\]
 for $\abs{G}=\abs{G\setminus B_r}+\abs{G\cap B_r}$. Note that for all $t\geqslant r$, it also follows that
\[\abs{G\setminus B_t}\leqslant \varepsilon\abs{G\cap B_t}.\]

Case 1: When $P(G\cap B_t)+P(G\setminus B_t)-P(G)\leqslant \frac{1}{2}T(G\setminus B_t)$ for some $t\in [r,1]$, by Proposition \ref{prop: nonoptimality1}
there exists a bounded set $E$ with $T(E)\leqslant T(G)$. By the proof of Proposition \ref{prop: nonoptimality1}, either $E=B_r \subseteq B_2$ or $E=\ell (G\cap B_t)\subseteq \ell B_t\subseteq B_2$, where $\ell \leqslant (1+\varepsilon)^{1/d} \leqslant 2 $.

Case 2: When $P(G\cap B_t)+P(G\setminus B_t)-P(G)\geqslant \frac{1}{2}T(G\setminus B_t)$ for all $t\in [r,1]$, we have the following observations. By the coarea formula (see Proposition 1 in Section 3.4.4 in \cite{Evans1992Measure}), for almost every $t\in [r, 1]$, 
\begin{align*}
\frac{\mathrm{d}}{\diff{t}}\abs{G\cap B_t} &=\frac{\mathrm{d}}{\diff{t}}\left(\int_{B_t}\Id_G \diff{x} \right)\\&=\H^{d-1}(G\cap \partial B_t)\\
    &=\frac{1}{2}(P(G\cap B_t)+P(G\setminus B_t)-P(G))\\
    &\geqslant \frac{1}{2}\cdot\frac{1}{2}T(G\setminus B_t)\\
    &\geqslant \frac{1}{4}P(G\setminus B_t)\\
    &\geqslant \frac{C(d)}{4}\abs{G\setminus B_t}^{1-1/d},
\end{align*}
by the isoperimetric inequality.
Thus, for almost every $t\in [r,1]$,
\begin{align*}
    \frac{\mathrm{d}}{\diff{t}}\abs{G\setminus B_t}=-\frac{\diff{}}{\diff{t}}\abs{G\cap B_t}
    \leqslant -\frac{C(d)}{4}\abs{G\setminus B_t}^{1-1/d}.
\end{align*}

By Gronwall's inequality, for all $t\in [r,1]$,
\begin{align*}
\abs{G\setminus B_t}^{1/d}&\leqslant \max\{0, \abs{G\setminus B_r}^{1/d}-\frac{C(d)}{4d}(t-r)\} \\ 
&\stackrel{\eqref{eq: volume_rate}}{\leqslant} \max\{0, \left(C(d,p)\abs{G}r^{\alpha/2}\right)^{1/d}-\frac{C(d)}{4d}(t-r)\} \\
&=\max\{0, \left(C(d,p)w_d \right)^{1/d}r^{1+\alpha/{2d}}-\frac{C(d)}{4d}(t-r)\}.
\end{align*}
In particular,
\[\abs{G\setminus B_1}^{1/d}\leqslant\max\{0,  \left(C(d,p)w_d \right)^{1/d}r^{1+\alpha/{2d}}-\frac{C(d)}{4d}(1-r)\}=0\]
whenever $r$ is sufficiently small.
Hence, for $r$ sufficiently small, it holds that
$\abs{G\setminus B_1}=0$, and the set $E=G$ satisfies \eqref{eq: comp_bdset}.

\end{proof}

Thanks to \prettyref{thm: comp_bdset}, we are able to apply the direct method to prove \prettyref{thm: minTset}.

\begin{proof}[Proof of \prettyref{thm: minTset}]
Let $(G_k)$ be a minimizing sequence to problem (\ref{problem: T}) with each $G_k$ being a bounded subset of $\R^d$ and $\abs{G_k}=m$. By \prettyref{thm: comp_bdset}, there exists an alternating minimizing sequence $(E_k)$ to problem (\ref{problem: T}) with $\abs{E_k}=m$, which is uniformly bounded by $B_2$. By the compactness of bounded sets of finite perimeter (Proposition \prettyref{prop: compact_bdset}), there exists a set $E$ of finite perimeter in $\R^d$, such that up to extracting a subsequence if necessary:
\[E_k\to E\quad\textrm{and}\quad E\subseteq B_2.\]

By the lower semi-continuity of $\W_p$ (\prettyref{lem: lsc_Wp}) and the lower semi-continuity of perimeter (Proposition \prettyref{prop: lsc of perimeter}), $T$ is lower semi-continuous. Thus,
\[T(E)\leqslant \liminf_{k\to\infty} T(E_k),\]
which yields that $E$ is a minimizer to problem (\ref{problem: T}).
\end{proof}

\nocite{*}
\bibliographystyle{alpha}
\bibliography{cite}

\Addresses
\end{document}